\definecolor{darkblue}{rgb}{0,0.1,.5}
\def\le{\leqslant}
\def\ge{\geqslant}
\def\C{\mathbb{C}}
\def\Q{\mathbb{Q}}
\def\Z{\mathbb{Z}}
\newcommand{\mb}[1]{{\textbf {\textit#1}}}
\DeclareMathOperator{\Hom}{Hom}
\DeclareMathOperator{\Tor}{Tor}
\DeclareMathOperator\MF{MF}
\def\sk{\mathcal K}
\def\sK{\mathcal K}
\def\ts{\textsc}
\def\pt{\mathit{pt}}
\def\zk{\mathcal Z_{\mathcal K}}
\newcommand{\hr}[2][]{\hyperref[#2]{#1~\ref{#2}}}
\newtheorem*{rep@theorem}{\rep@title}
\newcommand{\newreptheorem}[2]{%
\newenvironment{rep#1}[1]{%
 \def\rep@title{#2 \ref{##1}}%
 \begin{rep@theorem}}%
 {\end{rep@theorem}}}
\newtheorem{theorem}{Theorem}[section]
\newtheorem{proposition}[theorem]{Proposition}
\newtheorem{lemma}[theorem]{Lemma}
\theoremstyle{definition}
\newtheorem{example}[theorem]{Example}
\numberwithin{equation}{section}
\title[Equivariant cohomology of moment-angle complexes]{Equivariant cohomology of moment-angle complexes with respect to coordinate subtori}
\author{Taras Panov}
\address{Faculty of Mathematics and Mechanics, Lomonosov Moscow
State University, Russia;\newline
Faculty of Computer Science, National Research University Higher School of Economics, Moscow, Russia;\newline
Institute for Information Transmission Problems, Russian Academy of Sciences, Moscow, Russia;\newline
Steklov Mathematical Institute of Russian Academy of Sciences, Moscow, Russia}
\email{tpanov@mech.math.msu.su}
\author{Indira Zeinikesheva}
\address{Faculty of Mathematics and Mechanics, Lomonosov Moscow
State University, Russia;\newline
Steklov Mathematical Institute of Russian Academy of Sciences, Moscow, Russia}
\email{znikzk@gmail.com}
\subjclass[2020]{Primary 57S12; Secondary 13F55, 16E45, 55N91, 55R91}
\keywords{moment-angle complex, equivariant cohomology, equivariant formality, graded modules over polynomial rings}
\thanks{Research of T. Panov was carried out at the Steklov Mathematical Institute and funded by the Russian Science Foundation, grant 20-11-19998.}
\thanks{Research of I. Zeinikesheva was carried out at the Steklov International Mathematical Center and supported by the Ministry of Science and Higher Education of the Russian Federation (agreement no. 075-15-2022-265).}
\begin{document}

\maketitle

\begin{abstract}
We compute the equivariant cohomology $H^*_{T_I}(\zk)$ of moment-angle complexes $\zk$ with respect to the action of coordinate subtori $T_I \subset T^m$. We give a criterion for the equivariant formality of $\zk$ and obtain specifications for the cases of flag complexes and graphs. 
\end{abstract}

\section{Introduction}
Let $\sK$ be a simplicial complex on an $m$-element set $V$, and let $\zk$ be the corresponding moment-angle complex~$\zk$. 
We study the equivariant cohomology of $\zk$ with respect to the action of coordinate subtori $T_I\subset T^m$, where $I=\{i_1,\ldots,i_k\}\subset V$.

We construct two commutative integral dga models for $H^*_{T_I}(\zk)$. The first is given by
\[
  \bigl(\Lambda[u_i\colon i\notin I]\otimes\Z[\sK],d\bigr),\quad du_i=v_i,\; dv_i=0, 
\]
where $\Lambda[u_i\colon i\notin I]$ is the exterior algebra on degree-one generators $u_i$, $i\notin I$, and $\Z[\sK]$ is the face ring of~$\zk$. The second dga model $R_I(\sK)$ is given by the quotient of the first one by the ideal generated by $u_iv_i$ and $v_i^2$  with $i\notin I$. 
As a result we obtain
\begin{reptheorem}{cohomology}
There are isomorphisms of rings 
\begin{align*}
  H^*_{T_I}(\zk)&\cong H\bigl(\Lambda[u_i\colon i\notin I]\otimes\Z[\sK],d\bigr)
  \cong H^*\bigl(R_I(\sK)\bigr)\\
  &\cong\Tor_{\Z[v_1,\ldots,v_m]}\bigl(\Z[v_i\colon i\in I],\Z[\sK]\bigr),
\end{align*}
where $\Z[v_i\colon i\in I]$ is the $\Z[v_1,\ldots,v_m]$-module via the homomorphism sending $v_i$ to $0$ for $i\notin I$.
\end{reptheorem}

When $I=V$, the dga model above reduces to the face ring $\Z[\sK]$ with zero differential, and we recover the integral formality result of~\cite{no-ra05}.

When $I=\varnothing$, Theorem~\ref{cohomology} gives the description of the ordinary integral cohomology of 
$\zk$ of~\cite{b-b-p04} and~\cite{fran06}.

The additive (or $\Z[v_1,\ldots,v_m]$-module) isomorphism 
\[
H^*_{T_I}(\zk)\cong\Tor_{\Z[v_1,\ldots,v_m]}\bigl(\Z[v_i\colon i\in I],\Z[\sK]\bigr)\cong
\Tor_{\Z[v_i\colon i\notin I]}\bigl(\Z,\Z[\sK]\bigr)
\]
follows from the result of~\cite{l-m-m14}.

\smallskip

Next, we study the equivariant formality of $\zk$, that is, whether $H^*_{T_I}(\zk)$ is a free module over the polynomial ring $H^*_{T_I}(pt) = H^*(BT_I)=\Z[v_i\colon i\in I]$. 
We prove
\begin{reptheorem}{suffic}
Let $\sK$ be a simplicial complex on a finite set $V$. The following conditions are equivalent:

\begin{itemize}
\item[(a)] For any $I\in\sK$, the equivariant cohomology $H^*_{T_I}(\mathcal{Z}_\mathcal{K})$ is a free module over $H^*(BT_I)$.

\smallskip

\item[(b)] There is a partition $V=V_1\sqcup\cdots\sqcup V_p\sqcup U$ such that
\[
  \sK=\partial\varDelta(V_1)\mathbin{*}\cdots
  \mathbin{*}\partial\varDelta(V_p)\mathbin{*}
  \varDelta(U),
\]
where $\varDelta(U)$ denotes a full simplex on $U$, and $\partial\varDelta(V_i)$ denotes the boundary of a simplex on~$V_i$.

\smallskip

\item[(c)] The rational face ring $\mathbb Q[\sK]$ is a complete intersection ring (the quotient of the polynomial ring by an ideal generated by a regular sequence).
\end{itemize}
\end{reptheorem}

In the case of flag complexes we have the following specification:
\begin{reptheorem}{flag}
Let $\mathcal{K}$ be a flag complex on~$V$. Then the following conditions are equivalent:
\begin{itemize}
\item[(a)]  $H^*_{S^1_i}(\mathcal{Z}_\mathcal{K})$ is a free module over $\Z[v_i]$ for all $i$.

\item[(b)]$\sK=\partial\varDelta(V_1)\mathbin{*}\cdots\mathbin{*}
\partial\varDelta(V_p)\mathbin{*} \varDelta(U)$ where $|V_k|=2$ for $k=1,\ldots,p$.
\end{itemize}
\end{reptheorem}

A similar criterion holds in the case when $\sK$ is a simple graph:
\begin{reptheorem}{graph}
Let $\mathcal{K}$ be a one-dimensional complex (a simple graph). Then the following conditions are equivalent: 
\begin{itemize}
\item[(a)]
$H^*_{S^1_i}(\mathcal{Z}_{\mathcal{K}})$ is a free module over $\mathbb{Z}[\upsilon_i]$ for any $i$.
\item[(b)]
$\mathcal{K}$ is the one of the following: $\partial\varDelta^2$, $\partial\varDelta^1\mathbin{*}\partial\varDelta^1$, $\partial\varDelta^1$, $\varDelta^1$, $\partial\varDelta^1*\varDelta^0$, $\varDelta^0$.
\end{itemize}
\end{reptheorem}

Along the way we establish some additional properties of the equivariant cohomology of $\zk$ and give illustrative examples.

\section{Preliminaries}\label{sec_prel}
Let $\sK$ be a simplicial complex on a finite $m$-element set $V$, which we often identify with the index set $[m]=\{1,2,\ldots,m\}$. We refer to a subset $I=\{i_1,\ldots,i_k\}\subset V$ that is contained in $\sK$ as a \emph{simplex}.  We assume that $\varnothing\in\sK$ and allow \emph{ghost vertices}, that is, one-element subsets 
$\{i\}\in V$ such that $\{i\}\notin\sK$. 

Let $(\mb X,\mb A)=\{(X_1,A_1),\ldots,(X_m,A_m)\}$ be a sequence of $m$ pairs of pointed CW-complexes, $A_i\subset X_i$. For each
subset $I\subset V$, define
\[
  (\mb X,\mb A)^I\colonequals\bigl\{(x_1,\ldots,x_m)\in
  \prod_{j=1}^m X_j\colon\; x_j\in A_j\quad\text{for }j\notin I\bigl\}.
\]
The \emph{polyhedral product} of $(\mb X,\mb A)$ corresponding to $\sK$ is 
\[
  (\mb X,\mb A)^{\sK}\colonequals\bigcup_{I\in\mathcal K}(\mb X,\mb A)^I=
  \bigcup_{I\in\mathcal K}
  \Bigl(\prod_{i\in I}X_i\times\prod_{i\notin I}A_i\Bigl).
\]
Using the categorical language, denote by $\ts{cat}(\sk)$ the face category of $\sk$, with objects $I\in\sK$ and morphisms $I\subset J$. Define the
$\ts{cat}(\sK)$-diagram
\[
\begin{aligned}
  \mathcal D_\sK(\mb X,\mb A)\colon \ts{cat}(\sK)&\longrightarrow \ts{top},\\
  I&\longmapsto (\mb X,\mb A)^I,
\end{aligned}
\]
which maps the morphism $I\subset J$ of $\ts{cat}(\sK)$ to the
inclusion of spaces $(\mb X,\mb A)^I\subset(\mb X,\mb A)^J$. Then
we have
\[
  (\mb X,\mb A)^\sK=\mathop{\mathrm{colim}}
  \mathcal D_\sK(\mb X,\mb A)=\mathop{\mathrm{colim}}_{I\in\sK}
  (\mb X,\mb A)^I.
\]
In the case when all the pairs $(X_i,A_i)$ are the same, i.\,e.
$X_i=X$ and $A_i=A$ for $i=1,\ldots,m$, we use the notation
$(X,A)^\sK$ for $(\mb X,\mb A)^\sK$.

The \emph{moment-angle complex} $\zk$ is the polyhedral product $(D^2,S^1)^\sK$.
We refer to \cite[Chapter~4]{bu-pa15} for more details and examples. 

The \emph{face ring} of $\sK$ is the quotient ring
\[
  \mathbb{Z}[\sk]\colonequals \mathbb{Z}[v_1, \dots, v_m]/\mathcal{I}_\sk, 
\]
where $\mathcal{I}_\sk$ is the ideal generated by the square-free monomials $v_I=\prod_{i\in I} v_i$ for which $I\subset V$ is not a simplex of $\sK$.

\section{Equivariant cohomology}

For an action of a topological group $G$ on a space $X$, the \emph{Borel construction} is
\[
  EG\times_G X\colonequals EG\times X/(e\cdot g^{-1},g\cdot x)\sim(e,x),
\]
where $EG$ is the universal right $G$-space, $e\in EG$, $g\in G$, $x\in X$. There is the \emph{Borel fibration} $EG\times_G X\to BG$ over the classifying space $BG=EG/G$ with fibre~$X$. The \emph{equivariant cohomology} of $X$ is 
\[
  H^*_G(X)\colonequals H^*(EG\times_G X).
\]

The torus $T^m=(S^1)^m$ acts on $\zk=(D^2,S^1)^\sK$ coordinatewise. The universal bundle $ES^1\to BS^1$ is the infinite-dimensional Hopf bundle $S^\infty\to\C P^\infty$. 

We consider the equivariant cohomology of $\zk$ with respect to the action of coordinate subtori
\[
  T_I = \{(t_1, \dots, t_m) \in T^m\colon t_j = 1\text{ for }j \notin I\},
\]
where $I=\{i_1,\ldots,i_k\}\subset V$.

\begin{proposition}\label{yb}
There is a homotopy equivalence
\[
  ET_I\times_{T_I}\zk\xrightarrow{\simeq}(\!\mb Y,\mb B\,)^\sK,
\]
where
\[
  Y_i=\begin{cases}\C P^\infty,& i\in I,\\ D^2,& i\notin I,  \end{cases}\qquad
  B_i=\begin{cases}\pt,& i\in I,\\ S^1,& i\notin I.  \end{cases}
\]
\end{proposition}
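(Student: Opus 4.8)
The plan is to realize the Borel construction as a polyhedral product by applying the Borel construction factor-wise and using the fact that polyhedral products commute with the relevant operations. First I would recall that $ET_I \times_{T_I} \zk = ET_I \times_{T_I} (D^2, S^1)^\sK$, and that $T_I$ acts on $(D^2,S^1)^\sK \subset (D^2)^m$ only through the coordinates indexed by $I$, acting trivially on the coordinates indexed by $i \notin I$. Writing $T_I \cong \prod_{i \in I} S^1_i$, the Borel construction splits: $ET_I = \prod_{i\in I} ES^1_i$, and the diagonal quotient distributes over the product decomposition of the ambient space $(D^2)^m = \prod_{i \in I} D^2 \times \prod_{i\notin I} D^2$.

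The key step is to pass this factor-wise description through the colimit defining the polyhedral product. Since $\zk = \colim_{J \in \sK} (D^2,S^1)^J$ and the Borel construction $ET_I \times_{T_I} (-)$ preserves colimits (it is a homotopy colimit of a nice diagram of cofibrations, or one can argue on the level of the explicit union of closed subspaces, as the $T_I$-action is compatible with all the inclusions $(D^2,S^1)^J \subset (D^2,S^1)^{J'}$), we get
\[
  ET_I\times_{T_I}\zk \;\simeq\; \colim_{J\in\sK}\bigl(ET_I\times_{T_I}(D^2,S^1)^J\bigr).
\]
For a fixed $J \in \sK$, the space $(D^2,S^1)^J = \prod_{i\in J} D^2 \times \prod_{i\notin J} S^1$ has $T_I$ acting coordinatewise through $i\in I$, so
\[
  ET_I\times_{T_I}(D^2,S^1)^J \;=\;
  \prod_{i\in I\cap J} (ES^1\times_{S^1}D^2)\times
  \prod_{i\in I\setminus J}(ES^1\times_{S^1}S^1)\times
  \prod_{i\notin I, i\in J} D^2\times
  \prod_{i\notin I, i\notin J} S^1.
\]
Now one uses the standard homotopy equivalences $ES^1\times_{S^1}D^2 \simeq ES^1\times_{S^1}\{0\}= BS^1 = \C P^\infty$ (since $D^2$ is $S^1$-equivariantly contractible to its center) and $ES^1\times_{S^1}S^1 = ES^1 \simeq \pt$. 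Thus $ET_I\times_{T_I}(D^2,S^1)^J \simeq (\mb Y,\mb B)^J$ with $(Y_i,B_i)$ as in the statement, and these equivalences are natural with respect to the inclusions indexed by $J\subset J'$.

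Finally I would assemble the factor-wise equivalences into an equivalence of $\ts{cat}(\sK)$-diagrams $\mathcal D_\sK(ET_I\times_{T_I}(\mb X,\mb A)) \simeq \mathcal D_\sK(\mb Y,\mb B)$ and pass to colimits, invoking that a natural transformation of diagrams of cofibrant objects which is an objectwise equivalence induces an equivalence on (homotopy) colimits; here the diagrams consist of CW-complexes with closed inclusions, so the colimit computes the homotopy colimit. The main obstacle is the bookkeeping needed to make the factor-wise equivalences genuinely compatible with all the structure maps of the diagram — in particular ensuring the contraction $D^2 \to \{0\}$ is chosen $S^1$-equivariantly and uniformly across all $J$, so that no naturality is broken when a coordinate $i$ passes from the "$A_i$-part" to the "$X_i$-part" as $J$ grows — but this is routine once set up carefully, and the homotopy equivalence in the statement follows.
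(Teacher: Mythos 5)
Your proposal is correct and follows essentially the same route as the paper: both identify $ET_I\times_{T_I}\zk$ with the polyhedral product of the pairs $(S^\infty\times_{S^1}D^2,\,S^\infty\times_{S^1}S^1)$ for $i\in I$ and $(D^2,S^1)$ for $i\notin I$, and then invoke the homotopy equivalence of pairs onto $(\C P^\infty,\pt)$ together with homotopy invariance of polyhedral products. The colimit-and-diagram bookkeeping you spell out is exactly what the paper delegates to its citation of \cite[Theorem~4.3.2]{bu-pa15}.
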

\begin{proof}
We have
\[
  ET_I\times_{T_I}\zk=ET_I\times_{T_I}(D^2,S^1)^\sK=(\mb X,\mb A)^\sK,
\]
where
\[
  X_i=\begin{cases}S^\infty\times_{S^1}D^2,& i\in I,\\ D^2,& i\notin I,  \end{cases}  
  \qquad
  A_i=\begin{cases}S^\infty\times_{S^1}S^1,& i\in I,\\ S^1,& i\notin I.  \end{cases}
\]
The result follows from the homotopy equivalence of pairs 
\[
  (S^\infty\times_{S^1}D^2, S^\infty\times_{S^1}S^1)
  \xrightarrow{\simeq}(\C P^\infty,\pt),
\]
as in~\cite[Theorem~4.3.2]{bu-pa15} where the case $I=[m]$ is treated.
\end{proof}

Next we introduce two commutative dga models for the equivariant cohomology $H^*_{T_I}(\zk)$. First, consider the dga
\[
  \bigl(\Lambda[u_i\colon i\notin I]\otimes\Z[\sK],d\bigr), \quad du_i=v_i,\; dv_i=0, 
\]
where $\Lambda[u_i\colon i\notin I]$ is the exterior algebra on generators indexed by $V- I$. The grading is given by $\deg u_i=1$, $\deg v_i=2$.

Second, consider the quotient dga
\[
  R_I(\sK)\colonequals\Lambda[u_i\colon i\notin I]\otimes\Z[\sK]\big/
  (u_iv_i=v_i^2=0,\;i\notin I),
\]
noting that the ideal generated by $u_iv_i$ and $v_i^2$  with $i\notin I$ is $d$-invariant.

We denote by $C_*(X)$ and $C^*(X)$ the \emph{normalised} singular chain dg coalgebra and singular cochain dg algebra of a space~$X$, respectively. (A singular cochain is normalised if it vanishes on degenerate singular simplices~\cite[\S VIII.6]{macl63}; passing to normalised cochains does not change the quasi-isomorphism type of $C^*(X)$.)

\begin{theorem}\label{quism}
The singular cochain algebra $C^*(ET_I\times_{T_I}\zk)$ is quasi-isomorphic to 
$(\Lambda[u_i\colon i\notin I]\otimes\Z[\sK],d)$ and $R_I(\sK)$.
The quasi-isomorphisms are natural with respect to inclusion of subcomplexes.
\end{theorem}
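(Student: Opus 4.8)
The plan is to split the statement into two essentially independent parts: a purely algebraic comparison of the two models, and a homotopy-theoretic identification of $C^*(ET_I\times_{T_I}\zk)$ with the first of them.

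For the algebraic part, the quotient map
\[
  q\colon\bigl(\Lambda[u_i\colon i\notin I]\otimes\Z[\sK],d\bigr)\longrightarrow R_I(\sK)
\]
is a morphism of dgas, and I would show it is a quasi-isomorphism, naturally in $\sK$. Its kernel is the $d$-invariant ideal $\mathfrak a$ generated by $u_iv_i$ and $v_i^2$ with $i\notin I$, so it suffices to prove $\mathfrak a$ is acyclic. Introducing $R_I(\sK)$ one generator index $i\notin I$ at a time reduces this to a single index: writing $\Lambda[u_i\colon i\notin I]\otimes\Z[\sK]=\Lambda[u_i]\otimes A$ (over the ring $A$ obtained from the already treated indices), the relevant ideal equals $v_i^2A\oplus(v_iA)u_i$, which is exactly the mapping cone $\cone\bigl((v_iA,\pm d)\xrightarrow{\,\cdot v_i\,}(v_i^2A,\pm d)\bigr)$. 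Since multiplying a monomial by a variable does not change its support, the map $\cdot v_i\colon v_iA\to v_i^2A$ is bijective and commutes with the differential, so its cone is acyclic; hence $\mathfrak a$ is acyclic and $q$ is a quasi-isomorphism. (This is the integral analogue of the computation behind the $I=\varnothing$ model of~\cite{b-b-p04,bu-pa15}.) It then remains to produce a natural quasi-isomorphism $C^*(ET_I\times_{T_I}\zk)\simeq\bigl(\Lambda[u_i\colon i\notin I]\otimes\Z[\sK],d\bigr)$.

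For this I would realise $ET_I\times_{T_I}\zk$ as a homotopy fibre. Let $DJ(\sK)=(\C P^\infty,\pt)^\sK\simeq ET^m\times_{T^m}\zk$ be the Davis--Januszkiewicz space and let $T_{V\setminus I}\subset T^m$ be the complementary coordinate subtorus. Splitting $T^m=T_I\times T_{V\setminus I}$ and $ET^m=ET_I\times ET_{V\setminus I}$ and inspecting the Borel constructions, one sees that the composite $DJ(\sK)\to BT^m\to BT_{V\setminus I}$ (second arrow the coordinate projection) has homotopy fibre $ET_I\times_{T_I}\zk$; equivalently, by Proposition~\ref{yb}, its homotopy fibre is the polyhedral product $(\mb Y,\mb B)^\sK$. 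All of this is natural in $\sK$, by functoriality of the polyhedral product. Since $BT_{V\setminus I}\simeq(\C P^\infty)^{\lvert V\setminus I\rvert}$ is simply connected, the Eilenberg--Moore theorem now gives a natural quasi-isomorphism
\[
  C^*(ET_I\times_{T_I}\zk)\;\simeq\;C^*\bigl(DJ(\sK)\bigr)\otimes^{\mathbb L}_{C^*(BT_{V\setminus I})}\Z,
\]
realised by a two-sided bar construction. Invoking the integral formality of $DJ(\sK)$ of~\cite{no-ra05} --- $C^*(DJ(\sK))\simeq\Z[\sK]$ as a dga, naturally in $\sK$ and compatibly with the structure map of $C^*(BT^m)\simeq\Z[v_1,\dots,v_m]$ --- together with $C^*(BT_{V\setminus I})\simeq\Z[v_i\colon i\notin I]$ and the obvious inclusion, the right-hand side becomes $\Z[\sK]\otimes^{\mathbb L}_{\Z[v_i\colon i\notin I]}\Z$. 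Computing this derived tensor product by the Koszul resolution $\bigl(\Lambda[u_i\colon i\notin I]\otimes\Z[v_i\colon i\notin I],\,du_i=v_i\bigr)$ of $\Z$ and tensoring with $\Z[\sK]$ gives precisely $\bigl(\Lambda[u_i\colon i\notin I]\otimes\Z[\sK],d\bigr)$. Every arrow in the chain is a morphism of dgas and of $\Z[v_i\colon i\notin I]$-modules, and every construction is natural in $\sK$, so the composite quasi-isomorphism is multiplicative and natural; combined with the algebraic part this proves the theorem. The cases $I=V$ and $I=\varnothing$ recover the models of~\cite{no-ra05} and~\cite{b-b-p04}.

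I expect the main obstacle to be the compatibility, in the homotopy category of dgas, of the integral formality equivalence $C^*(DJ(\sK))\simeq\Z[\sK]$ with the structure algebra map from $C^*(BT^m)$: it is precisely this compatibility --- not a bare quasi-isomorphism of cochain algebras --- that forces the Eilenberg--Moore computation to output the Koszul-type model, and it must be extracted from~\cite{no-ra05} or re-proved, e.g.\ via the homotopy-colimit presentation of $DJ(\sK)$ over the face category $\ts{cat}(\sK)$; in the same vein one must rectify the $C^*(BT^m)$-module structures to make every step a genuine zig-zag of dga maps rather than merely an $E_\infty$-equivalence. An alternative, self-contained route avoids the fibration and computes $C^*\bigl((\mb Y,\mb B)^\sK\bigr)$ directly as a homotopy limit over $\ts{cat}(\sK)^{\mathrm{op}}$ of the diagram $J\mapsto\Z[v_i\colon i\in J\cap I]\otimes\Lambda[u_i\colon i\in V\setminus(J\cup I)]$ of cochain models of the subproducts $(\mb Y,\mb B)^J$, generalising~\cite{no-ra05,b-b-p04}; there the crux becomes the integral collapse and the vanishing of higher derived limits, which is the same phenomenon in a different guise.
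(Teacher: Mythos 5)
Your algebraic half --- the acyclicity of the ideal $(u_iv_i,\,v_i^2\colon i\notin I)$ via the one-index-at-a-time mapping-cone argument --- is correct and is exactly what the paper does (it cites the case $I=\varnothing$, \cite[Lemma~3.2.6]{bu-pa15}). The topological half takes a genuinely different route, and it contains a gap that you yourself flag but do not close. The paper never invokes the Eilenberg--Moore theorem: it replaces $ET_I\times_{T_I}\zk$ by the polyhedral product $(\mb Y,\mb B)^\sK$ of Proposition~\ref{yb}, writes down explicit normalised-cochain dga quasi-isomorphisms $\Lambda[u]\to C^*(S^1)$, $\Z[v]\to C^*(\C P^\infty)$ and $\Lambda[u]\otimes\Z[v]/(uv,v^2)\to C^*(D^2)$ for the building blocks, tensors them up to the full simplex via Eilenberg--Zilber, and then glues over the face category by Mayer--Vietoris induction (equivalently, by comparing Reedy fibrant limits of the diagrams $J\mapsto R_I(\varDelta(J))$ and $J\mapsto C^*((\mb Y,\mb B)^J)$). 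Every map in that chain is a strict dga map, so multiplicativity and naturality are automatic.

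The gap in your route is precisely what you call ``the main obstacle'', and it is not a technicality. Your identification of $ET_I\times_{T_I}\zk$ with the homotopy fibre of $DJ(\sK)\to BT_{V\setminus I}$ is fine, and Eilenberg--Moore does give the \emph{additive} statement $H^*_{T_I}(\zk)\cong\Tor_{\Z[v_i\colon i\notin I]}(\Z,\Z[\sK])$ --- but that is already in \cite{l-m-m14}, which the paper cites for exactly this. What Theorem~\ref{quism} adds is the ring structure over $\Z$, and there your chain breaks: $C^*(BT_{V\setminus I})$ is not commutative, so the two-sided bar construction computing $C^*(DJ(\sK))\otimes^{\mathbb L}_{C^*(BT_{V\setminus I})}\Z$ carries no natural dga structure matching the cup product on the fibre, and replacing it by the commutative ring $\Z[v_i\colon i\notin I]$ requires rectifying the $C^*(BT^m)$-module structure compatibly with the Notbohm--Ray formality equivalence. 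You assert that ``every arrow is a morphism of dgas'' and then concede two sentences later that establishing this is the main unresolved difficulty; as written the proof is therefore incomplete at its central step. Your fallback sketch (computing $C^*((\mb Y,\mb B)^\sK)$ as a homotopy limit over $\ts{cat}^{\mathrm{op}}(\sK)$ of cochain models of the pieces $(\mb Y,\mb B)^J$) is in substance the paper's argument; to finish, you would have to execute it, at which point the Eilenberg--Moore detour becomes unnecessary.
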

\begin{proof}
We combine the arguments of \cite{no-ra05}, \cite[\S4.5, \S8.1]{bu-pa15}  and~\cite{fran}.

The acyclicity of the ideal generated by $v_i^2$ and $u_iv_i$ for $i\notin I$ in 
$\Lambda[u_i\colon i\notin I]\otimes\Z[\sK]$ is established in the same way as \cite[Lemma~3.2.6]{bu-pa15}, where the case $I=\varnothing$ is treated. This gives a quasi-isomorphism $\Lambda[u_i\colon i\notin I]\otimes\Z[\sK]\xrightarrow{\simeq}R_I(\sK)$. For the remaining quasi-isomorphism $R_I(\sK)\simeq C^*(ET_I\times_{T_I}\zk)$, we use the homotopy equivalent polyhedral product model 
$(\!\mb Y,\mb B)^\sK$ of Proposition~\ref{yb}.

Throughout the proof, we use the following zig-zag of quasi-isomorphisms of dgas~\cite[\S7.2]{mccl01}:
\begin{equation}\label{ez}
  C^*(X)\otimes C^*(Y)\xrightarrow{\simeq}
  \Hom(C_*(X)\otimes C_*(Y),\Z)\xleftarrow{\simeq} C^*(X\times Y),
\end{equation}
where the arrow on the right is the dual of the Eilenberg--Zilber map $C_*(X)\otimes C_*(Y)\to C_*(X\times Y)$, which is a quasi-isomorphism of dg coalgebras~\cite[(17.6)]{ei-mo66}.

First consider the case $\sK=\varnothing$ with $m$ ghost vertices. Then $\zk=T^m$ and 
\[
  ET_I\times_{T_I}\zk\simeq T^m/T_I=(\!\mb Y,\mb B)^\sK=\prod_{i\notin I}S^1,
\]
whereas $R_I(\sK)=\Lambda[u_i\colon i\notin I]$. There is a quasi-isomorphism 
$\Lambda[u]=H^*(S^1)\to C^*(S^1)$ mapping $u$ to its representing singular 1-cocycle (here it is important that we work with normalised cochains). Applying~\eqref{ez} we obtain the required quasi-isomorphism $\Lambda[u_i\colon i\notin I]\simeq C^*(\prod_{i\notin I}S^1)$.


Now consider the case $m=1$ and $\sK=\varDelta^0$, a $0$-simplex. If $I=\varnothing$, then $(\!\mb Y,\mb B)^\sK=D^2$ and $R_I(\sK)=\Lambda[u]\otimes \Z[v]/(uv=v^2=0)$. Let $\varphi\colon[01]\to D^2$ be the standard parametrisation of the boundary circle $S^1$, viewed as a singular $1$-simplex. Let $\psi\colon[012]\to D^2$ be a singular $2$-simplex such that $\psi|_{[12]}=\varphi$ and $\psi|_{[02]}$, $\psi|_{[01]}$ are constant maps to the basepoint $1\in S^1$. Then $\partial\varphi=0$ and 
$\partial\psi=\varphi$, as we work with the normalised chains. Now if 
$\alpha\in C^1(D^2)$ is the cochain dual to $\varphi$ and $\beta\in C^2(D^2)$ is dual to~$\psi$, then $\Lambda[u]\otimes \Z[v]/(uv=v^2=0)\to C^*(D^2)$ mapping $u$ to
$\alpha$ and $v$ to $\beta$ is a quasi-isomorphism. If $I=\{1\}$, then $(\!\mb Y,\mb B)^\sK=\C P^\infty$ and $R_I(\sK)=\Z[v]$. There is a quasi-isomorphism 
$\Z[v]=H^*(\C P^\infty)\to C^*(\C P^\infty)$ mapping $v$ to its representing singular 2-cocycle.

Next consider the case $\sK=\varDelta^{m-1}=\varDelta[m]$, the full simplex on~$[m]$. Applying~\eqref{ez} and the K\"unneth theorem, we obtain a zig-zag of quasi-isomorphisms
\begin{multline}\label{zigzagJ}
  R_I(\varDelta[m])=\Lambda[u_i\colon i\notin I]\otimes\Z[v_1,\ldots,v_m]\big/ 
  (u_iv_i,v_i^2\colon i\notin I)
  \\=\bigotimes_{i\in I}\Z[v_i]\otimes
  \bigotimes_{i\notin I}\bigr(\Lambda[u_i]\otimes\Z[v_i]/(u_iv_i,v_i^2)\bigl)
  \xrightarrow{\simeq}\bigotimes_{i\in I}C^*(\C P^\infty)\otimes
  \bigotimes_{i\notin I}C^*(D^2)\\
  \xrightarrow{\simeq}\cdots\xleftarrow{\simeq}
  C^*\Bigl(\prod_{i\in I}\C P^\infty\times\prod_{i\notin I}D^2\Bigr)=
  C^*\bigl((\!\mb Y,\mb B)^{\varDelta[m]}\bigr),
\end{multline}
which completes the proof for the case $\sK=\varDelta[m]$.

The general case is proved by induction on the number of simplices in~$\sK$ using the naturality with respect of inclusion of subcomplexes and the Mayer--Vietoris sequence, as in~\cite[Theorem~1]{fran}. Namely, we add simplices one by one to the empty simplicial complex on $[m]$ and use the zig-zag of dga maps between the two short exact sequences for any two simplicial complexes $\sK_1$ and $\sK_2$ on~$[m]$:
\[
   0\to R_I(\sK_1\cup\sK_2) \to R_I(\sK_1)\oplus R_I(\sK_2)
   \to R_I(\sK_1\cap\sK_2) \to 0
\]
and
\[
   0\to C^*((\!\mb Y,\mb B)^{\sK_1\cup\sK_2}) 
   \to C^*((\!\mb Y,\mb B)^{\sK_1})\oplus 
   C^*((\!\mb Y,\mb B)^{\sK_2})
   \to C^*((\!\mb Y,\mb B)^{\sK_1\cap\sK_2}) \to 0
\]
The zig-zags between the middle and right nonzero terms are quasi-isomorphisms by induction. Then the zig-zag on the left is also a quasi-isomorphism by the cohomology long exact sequence and five lemma.

It may be more illuminating to realise the dgas in question as the limits of dgas correponding to simplices $I\in\sK$. Namely, given a subset $J\subset V$, let $\varDelta(J)$ denote a simplex on~$J$, viewed as a simplicial complex on~$V$ (with ghost vertices $V-J$). Then
\[
  R_I(\varDelta(J))=\Lambda[u_i\colon i\notin I]\otimes\Z[v_j\colon j\in J]\big/
  (u_jv_j=v_j^2=0,\;j\in J- I),
\]
Consider the $\ts{cat}^{\mathrm{op}}(\sK)$-diagram
\[
  \mathcal R_{I,\sK}\colon \ts{cat}^{\mathrm{op}}(\sK)\longrightarrow \ts{dga},\quad
  J\longmapsto R_I(\varDelta(J)),
\]
sending a morphism $J_1\subset J_2$ of $\ts{cat}^{\mathrm{op}}(\sK)$ to the surjection of dgas
$R_I(\varDelta(J_2))\to R_I(\varDelta(J_1))$. Then
\[
  R_I(\sK)=\lim\mathcal R_{I,\sK}=\lim_{J\in\sK}R_I(\varDelta(J))
\]
Similarly, we have a $\ts{cat}^{\mathrm{op}}(\sK)$-diagram
\[
  \mathcal C_{I,\sK}\colon \ts{cat}^{\mathrm{op}}(\sK)\longrightarrow \ts{dga},\quad
  J\longmapsto C^*((\!\mb Y,\mb B)^J).
\]
The zig-zag of quasi-isomorphisms~\eqref{zigzagJ} induces an objectwise weak equivalence of diagrams $\mathcal R_{I,\sK}\simeq \mathcal C_{I,\sK}$. The canonical maps $\mathcal R_{I,\sK}(J)\to\lim\mathcal R_{I,\sK}|_{\ts{cat}^{\mathrm{op}}
(\partial\varDelta(J))}$ and $\mathcal C_{I,\sK}(J)\to\lim\mathcal C_{I,\sK}|_{\ts{cat}^{\mathrm{op}}(\partial\varDelta(J))}$ are fibrations (surjections of dgas). Therefore, both diagrams $\mathcal R_{I,\sK}$ and $\mathcal C_{I,\sK}$ are Reedy fibrant (see~\cite[Appendix~C.1]{bu-pa15}). Their limits are therefore quasi-isomorphic. Thus, we obtain the required zig-zag of quasi-isomorphisms of dgas
\[
  R_I(\sK)=\lim_{J\in\sK}R_I(\varDelta(J))\simeq
  \lim_{J\in\sK}C^*((\!\mb Y,\mb B)^J)\xleftarrow\simeq 
  C^*(\mathop{\mathrm{colim}}\limits_{J\in\sK}(\!\mb Y,\mb B)^J)=C^*((\!\mb Y,\mb B)^\sK),
\]
where the second-to-last map is a quasi-isomorphism by excision (or by Mayer--Vietoris).
\end{proof}

For the equivariant cohomology, we obtain

\begin{theorem}\label{cohomology}
There are isomorphisms of rings
\begin{align*}
  H^*_{T_I}(\zk)&\cong H^*\bigl(\Lambda[u_i\colon i\notin I]\otimes\Z[\sK],d\bigr)
  \cong H^*\bigl(R_I(\sK),d\bigr)\\
  &\cong\Tor_{\Z[v_1,\ldots,v_m]}\bigl(\Z[v_i\colon i\in I],\Z[\sK]\bigr),
\end{align*}
where $\Z[v_i\colon i\in I]$ is the $\Z[v_1,\ldots,v_m]$-module via the homomorphism sending $v_i$ to $0$ for $i\notin I$.
\end{theorem}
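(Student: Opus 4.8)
The plan is to deduce Theorem~\ref{cohomology} from Theorem~\ref{quism} together with a standard Koszul-resolution computation of the relevant $\Tor$. First, Theorem~\ref{quism} gives quasi-isomorphisms of dgas
\[
  C^*(ET_I\times_{T_I}\zk)\simeq\bigl(\Lambda[u_i\colon i\notin I]\otimes\Z[\sK],d\bigr)\simeq R_I(\sK),
\]
so passing to cohomology and using $H^*_{T_I}(\zk)=H^*(ET_I\times_{T_I}\zk)$ (by the definition of equivariant cohomology and Proposition~\ref{yb}) yields the first two isomorphisms of rings immediately. The only real content is therefore the last isomorphism, identifying $H^*(\Lambda[u_i\colon i\notin I]\otimes\Z[\sK],d)$ with $\Tor_{\Z[v_1,\ldots,v_m]}(\Z[v_i\colon i\in I],\Z[\sK])$.

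For that, I would observe that $\Lambda[u_i\colon i\notin I]\otimes\Z[v_1,\ldots,v_m]$ with $du_i=v_i$, $dv_i=0$ is precisely the Koszul complex of the regular sequence $\{v_i\colon i\notin I\}$ in $\Z[v_1,\ldots,v_m]$; since this sequence is regular, the Koszul complex is a free resolution of the quotient $\Z[v_1,\ldots,v_m]/(v_i\colon i\notin I)=\Z[v_i\colon i\in I]$ as a $\Z[v_1,\ldots,v_m]$-module. Tensoring this resolution over $\Z[v_1,\ldots,v_m]$ with $\Z[\sK]$ gives exactly the dga $\bigl(\Lambda[u_i\colon i\notin I]\otimes\Z[\sK],d\bigr)$, whose cohomology is by definition $\Tor_{\Z[v_1,\ldots,v_m]}\bigl(\Z[v_i\colon i\in I],\Z[\sK]\bigr)$. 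This identification is additive and, because the Koszul complex is a dga and the resolution is multiplicative, it respects the ring (in fact $\Z[\sK]$-algebra) structure, giving the isomorphism as rings.

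I would then note that the sign in the Koszul differential may differ from the topological normalisation, but a rescaling $u_i\mapsto\pm u_i$ absorbs this, so it does not affect the isomorphism type. I would also remark that the module structure over $H^*(BT_I)=\Z[v_i\colon i\in I]$ coming from the Borel fibration corresponds under this isomorphism to the evident $\Z[v_i\colon i\in I]$-module structure on $\Tor$, which can be checked on the cochain level since the classifying map $ET_I\times_{T_I}\zk\to BT_I$ is modelled by the inclusion $\Z[v_i\colon i\in I]\hookrightarrow\Lambda[u_i\colon i\notin I]\otimes\Z[\sK]$. Finally, the alternative form $\Tor_{\Z[v_i\colon i\notin I]}(\Z,\Z[\sK])$ mentioned after the theorem statement follows by a change-of-rings argument, writing $\Z[v_1,\ldots,v_m]=\Z[v_i\colon i\in I]\otimes\Z[v_i\colon i\notin I]$ and using flatness of $\Z[v_i\colon i\in I]$.

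The main obstacle I anticipate is purely bookkeeping: making the ring structure (not just the additive structure) match up, which requires knowing that the zig-zag of quasi-isomorphisms in Theorem~\ref{quism} is multiplicative — but this is already asserted there, since all the maps in that proof are dga maps — and that the Koszul complex computes $\Tor$ compatibly with products, which is the classical statement that for a regular sequence the Koszul resolution is a dg algebra resolution. So once Theorem~\ref{quism} is in hand, there is essentially nothing deep left; the proof is a short application of homological algebra.
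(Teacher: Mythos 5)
Your proposal is correct and follows essentially the same route as the paper: the first two isomorphisms are read off from Theorem~\ref{quism}, and the $\Tor$ description comes from recognising $\Lambda[u_i\colon i\notin I]\otimes\Z[v_1,\ldots,v_m]$ with $du_i=v_i$ as the Koszul resolution of the $\Z[v_1,\ldots,v_m]$-module $\Z[v_i\colon i\in I]$ and tensoring it with $\Z[\sK]$. Your additional remarks on multiplicativity, the $H^*(BT_I)$-module structure and the change-of-rings reformulation are consistent with the discussion surrounding the theorem in the paper.
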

\begin{proof}
The first two isomorphisms follow from Theorem~\ref{quism}. For the last one, consider the Koszul resolution $\Lambda[u_i\colon i\notin I]\otimes\Z[v_i\colon i\notin I]\to\Z$ of the augmentation $\Z[v_i\colon i\notin I]$-module~$\Z$. Tensoring it with $\Z[v_i\colon i\in I]$ we obtain a free resolution of the $\Z[v_1,\ldots,v_m]$-module 
$\Z[v_i\colon i\in I]$:
\[
  \Lambda[u_i\colon i\notin I]\otimes\Z[v_1,\ldots,v_m]\to\Z[v_i\colon i\in I].
\]
Then $\Tor_{\Z[v_1,\ldots,v_m]}(\Z[v_i\colon i\in I],\Z[\sK])$ is the cohomology of the complex obtained by applying $\otimes_{\Z[v_1,\ldots,v_m]}\Z[\sK]$ to the resolution above, which gives $\Lambda[u_i\colon i\notin I]\otimes\Z[\sK]$.
\end{proof}

When $I=[m]$, we obtain that the singular cochain algebra of $ET^m\times_{T^m}\zk\simeq(\C P^\infty,\pt)^\sK$ is quasi-isomorphic to $\Z[\sK]$ with zero differential, which is the integral formality result of~\cite{no-ra05}.

When $I=\varnothing$, we obtain the description of the ordinary integral cohomology of 
$\zk$ of~\cite{b-b-p04} and~\cite{fran06}.

\section{Equivariant formality}
A $T^k$-space $X$ is called \emph{equivariantly formal} if $H^*_{T^k}(X)$ is  free as a module over $H^*_{T^k}(\pt)=H^*(BT^k)$. The latter condition implies that the spectral sequence of the bundle $ET^k\times_{T^k} X\to BT^k$ collapses at the $E_2$ page.

Using the results of the previous section, we obtain that $\zk$ is equivariantly formal with respect to the action of $T_I$ if $\Tor_{\Z[v_1,\ldots,v_m]}(\Z[v_i\colon i\in I],\Z[\sK])$ is free as a module over $H^*(BT_I)=\Z[v_i\colon i\in I]$.

\begin{lemma}\label{simplex}
Let $\sK=\varDelta[m]$. Then $H^*_{T_I}(\zk)$ is free as a $H^*(BT_I)$-module, for any $I\subset[m]$.
\end{lemma}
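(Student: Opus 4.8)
When $\sK = \varDelta[m]$, the moment-angle complex is $\zk = (D^2)^m$, which is $T^m$-equivariantly contractible, so one expects $H^*_{T_I}(\zk) \cong H^*(BT_I) = \Z[v_i \colon i \in I]$, which is certainly a free module over itself. The plan is to make this precise using the algebraic models already at hand rather than invoking contractibility directly, since the point is to see the module structure explicitly. By Theorem~\ref{cohomology}, $H^*_{T_I}(\zk) \cong H^*\bigl(R_I(\varDelta[m]),d\bigr)$, and by the computation in~\eqref{zigzagJ} the dga $R_I(\varDelta[m])$ splits as a tensor product
\[
  R_I(\varDelta[m]) = \bigotimes_{i \in I}\Z[v_i] \otimes
  \bigotimes_{i \notin I}\bigl(\Lambda[u_i]\otimes\Z[v_i]/(u_iv_i,v_i^2)\bigr),
\]
with the differential acting only on the second group of factors, sending $u_i \mapsto v_i$.

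The first step is to compute the cohomology of a single factor $\Lambda[u_i]\otimes\Z[v_i]/(u_iv_i,v_i^2)$ with $du_i = v_i$. As a $\Z$-module this factor has basis $1, u_i, v_i$ in degrees $0, 1, 2$; the differential sends $u_i \mapsto v_i$ and kills $1$ and $v_i$, so it is acyclic in positive degrees and its cohomology is just $\Z$ concentrated in degree zero. (This is exactly the acyclicity statement quoted from \cite[Lemma~3.2.6]{bu-pa15} in the proof of Theorem~\ref{quism}, specialised to one variable.) The second step is to apply the Künneth theorem over $\Z$: since each factor has cohomology that is free over $\Z$ (indeed $\Z$ or a polynomial ring), there are no Tor terms, and
\[
  H^*\bigl(R_I(\varDelta[m]),d\bigr) \cong \bigotimes_{i \in I}\Z[v_i] \otimes \bigotimes_{i \notin I}\Z \cong \Z[v_i \colon i \in I].
\]

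The third and final step is to identify this isomorphism as one of $H^*(BT_I)$-modules, i.e.\ of $\Z[v_i \colon i \in I]$-modules. This is immediate because the module structure on $H^*_{T_I}(\zk)$ is induced by the inclusion of $\Z[v_i \colon i \in I] = \bigotimes_{i\in I}\Z[v_i]$ as the subalgebra of $R_I(\varDelta[m])$ spanned by the factors indexed by $i \in I$, on which the differential vanishes; under the Künneth isomorphism this subalgebra maps isomorphically onto the whole cohomology ring. Hence $H^*_{T_I}(\zk) \cong \Z[v_i \colon i \in I]$ as a module over itself, which is free of rank one.

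There is essentially no obstacle here: the only thing to be careful about is that the Künneth theorem applies cleanly, which it does precisely because each tensor factor has torsion-free (in fact free) cohomology, so the argument goes through over $\Z$ and not merely over a field. Alternatively, one could bypass the dga entirely and use the Tor description: $H^*_{T_I}(\zk) \cong \Tor_{\Z[v_1,\ldots,v_m]}\bigl(\Z[v_i\colon i \in I], \Z[\varDelta[m]]\bigr) = \Tor_{\Z[v_1,\ldots,v_m]}\bigl(\Z[v_i\colon i \in I], \Z[v_1,\ldots,v_m]\bigr) = \Z[v_i \colon i \in I]$, since $\Z[\varDelta[m]]$ is the whole polynomial ring and is therefore free (so all higher Tor vanishes). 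Either route gives the claim.
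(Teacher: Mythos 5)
Your argument is correct, but it takes a different route from the paper. The paper's proof is a one-line topological observation: for $\sK=\varDelta[m]$ the moment-angle complex is $\zk\cong D^{2m}=(D^2)^m$, which is $T_I$-equivariantly contractible (contract each disc to its centre, a fixed point), so $H^*_{T_I}(\zk)\cong H^*_{T_I}(\pt)=H^*(BT_I)$ is free of rank one over itself. You instead compute inside the algebraic model: you split $R_I(\varDelta[m])$ as in~\eqref{zigzagJ}, observe that each factor $\Lambda[u_i]\otimes\Z[v_i]/(u_iv_i,v_i^2)$ with $du_i=v_i$ has cohomology $\Z$, and apply the integral K\"unneth theorem (valid here since each factor has free cohomology), arriving at $\Z[v_i\colon i\in I]$; your alternative via $\Tor_{\Z[v_1,\ldots,v_m]}(\Z[v_i\colon i\in I],\Z[v_1,\ldots,v_m])$ is equally valid and even quicker, since the second argument is free and all higher $\Tor$ vanishes. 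What the paper's approach buys is brevity and independence from Theorem~\ref{quism}; what yours buys is an explicit view of the $H^*(BT_I)$-module structure in the model that is used throughout the later freeness arguments (Lemmas~\ref{freesim} and~\ref{condition}), at the cost of implicitly assuming that the quasi-isomorphisms of Theorem~\ref{quism} identify the module structure with multiplication by the subalgebra $\Z[v_i\colon i\in I]\subset R_I(\sK)$ --- an identification the paper also uses without separate comment, so this is not a gap.
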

\begin{proof}
For $\sK=\varDelta[m]$, we have $\zk\cong D^{2m}$ is $T_I$-equivariantly contractible. Hence, $H^*_{T_I}(\zk)\cong H^*_{T_I}(\pt)= H^*(BT_I)$ is  a free $H^*(BT_I)$-module.
\end{proof}

\begin{lemma}\label{boundary}
Let $\sK=\partial\varDelta[m]$, the boundary of a simplex on~$[m]$. Then $\zk\cong S^{2m-1}$ and $H^*_{T_I}(\zk)$ is free as a $H^*(BT_I)$-module, for any $I\subsetneq[m]$.
\end{lemma}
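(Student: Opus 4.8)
The plan is to combine the standard identification of $\zk$ as a sphere with the $\Tor$-description of equivariant cohomology from Theorem~\ref{cohomology}.

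First I would record the two elementary facts. For $\sK=\partial\varDelta[m]$ the polyhedral product $(D^2,S^1)^{\partial\varDelta[m]}=\bigcup_{I\subsetneq[m]}\prod_{i\in I}D^2\times\prod_{i\notin I}S^1$ is precisely the boundary of the polydisc $(D^2)^m$, so $\zk\cong\partial(D^{2m})=S^{2m-1}$ (see~\cite[Chapter~4]{bu-pa15}). Since the only non-face of $\partial\varDelta[m]$ is $[m]$ itself, the face ring is the hypersurface ring $\Z[\partial\varDelta[m]]=\Z[v_1,\ldots,v_m]/(v_1\cdots v_m)$.

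Next, by Theorem~\ref{cohomology} there is an isomorphism of $H^*(BT_I)=\Z[v_i\colon i\in I]$-modules
\[
  H^*_{T_I}(\zk)\cong\Tor_{\Z[v_1,\ldots,v_m]}\bigl(\Z[v_i\colon i\in I],\Z[\partial\varDelta[m]]\bigr),
\]
so it suffices to compute this $\Tor$. As $v_1\cdots v_m$ is a non-zero-divisor in the polynomial ring, the face ring has the length-one free resolution
\[
  0\longrightarrow\Z[v_1,\ldots,v_m]\xrightarrow{\;\cdot\,v_1\cdots v_m\;}\Z[v_1,\ldots,v_m]\longrightarrow\Z[\partial\varDelta[m]]\longrightarrow 0,
\]
with the source generator placed in internal degree $2m$. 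Applying $\Z[v_i\colon i\in I]\otimes_{\Z[v_1,\ldots,v_m]}(-)$ turns this into the two-term complex $\Z[v_i\colon i\in I]\xrightarrow{\,\cdot\,\mu\,}\Z[v_i\colon i\in I]$ computing the $\Tor$, where $\mu$ is the image of $v_1\cdots v_m$ under $v_i\mapsto 0$ $(i\notin I)$.

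The key observation --- and the only place the hypothesis $I\subsetneq[m]$ is used --- is that some $j\in[m]\setminus I$, whence $v_j\mapsto 0$ forces $\mu=0$; the differential vanishes and
\[
  H^*_{T_I}(\zk)\cong\Z[v_i\colon i\in I]\oplus\Z[v_i\colon i\in I],
\]
a free $H^*(BT_I)$-module of rank $2$, with generators in cohomological degrees $0$ and $2m-1$. I do not expect a genuine obstacle here: the content is entirely in recognising $\Z[\partial\varDelta[m]]$ as a complete intersection with a two-term resolution and in noticing that restricting to a proper coordinate subtorus kills the resolution's differential. The one point to state carefully is that the $H^*(BT_I)$-module structure on $H^*_{T_I}(\zk)$ corresponds to the tautological $\Z[v_i\colon i\in I]$-structure on the first $\Tor$-argument, which is already built into the proof of Theorem~\ref{cohomology}. (For $I=[m]$ one instead gets $\mu=v_1\cdots v_m\neq0$, an injective differential, and $H^*_{T^m}(\zk)\cong\Z[\partial\varDelta[m]]$, which is not free --- so the restriction is essential.) Should one wish to avoid Theorem~\ref{cohomology}, the same rank-$2$ answer follows geometrically by splitting off a disc factor $D^2_j$ with $j\notin I$, writing $S^{2m-1}=(S^{2m-3}\times D^2_j)\cup(D^{2m-2}\times S^1_j)$ with $T_I$ acting trivially in the $j$-th slot, and checking that the Mayer--Vietoris sequence for $H^*_{T_I}$ collapses; I would nevertheless present the $\Tor$ argument as the main one.
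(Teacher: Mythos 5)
Your proof is correct, but it takes a genuinely different route from the paper. The paper argues topologically: it runs the Serre spectral sequence of the Borel fibration $ET_I\times_{T_I}\zk\to BT_I$ with fibre $S^{2m-1}$, exhibits the explicit cocycle $u_iv_1\cdots\widehat{v_i}\cdots v_m$ with $i\notin I$ to show that the restriction $H^*_{T_I}(\zk)\to H^*(\zk)$ is surjective, and concludes by Leray--Hirsch-type collapse that $H^*_{T_I}(\zk)\cong H^*(BT_I)\otimes H^*(S^{2m-1})$. You instead exploit the algebraic description $H^*_{T_I}(\zk)\cong\Tor_{\Z[v_1,\ldots,v_m]}(\Z[v_i\colon i\in I],\Z[\sK])$, resolve the hypersurface ring $\Z[v_1,\ldots,v_m]/(v_1\cdots v_m)$ by the two-term Koszul complex on the non-zero-divisor $v_1\cdots v_m$, and observe that tensoring down to $\Z[v_i\colon i\in I]$ kills the differential precisely because some $j\notin I$. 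Both arguments use the hypothesis $I\subsetneq[m]$ in the same essential way (a variable outside $I$ must survive in the top class, resp.\ die in the differential), and both land on the same answer; your version is shorter and pins down the module explicitly as free of rank $2$ with generators in degrees $0$ and $2m-1$, at the cost of invoking balancedness of $\Tor$ and the compatibility of the $H^*(BT_I)$-module structures --- a point you rightly flag, and which the paper itself relies on in the paragraph preceding its Lemma~\ref{simplex}. The paper's spectral-sequence argument has the advantage of generalising to situations where no small resolution is available (it reappears verbatim in the example after Theorem~\ref{flag}).
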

\begin{proof}
Consider the spectral sequence of the bundle $ET_I\times_{T_I}\zk\to BT_I$ with fibre $\zk\cong S^{2m-1}$. We claim that the homomorphism $H^*(ET_I\times_{T_I}\zk)\to H^*(\zk)$ induced by the inclusion of the fibre is surjective. Indeed, by the construction of the previous section, $H^*(ET_I\times_{T_I}\zk)\to H^*(\zk)$ is the cohomology homomorphism induced by the dga map
\[
  \bigl(\Lambda[u_i\colon i\notin I]\otimes\Z[\sK],d\bigr)\to
  \bigl(\Lambda[u_1,\ldots,u_m]\otimes\Z[\sK],d\bigr).
\]
We have $H^*(\zk)=\Z\langle1,[u_iv_1\cdots \widehat v_{i}\cdots v_m]\rangle$, where $[u_iv_1\cdots \widehat v_{i}\cdots v_m]\in H^{2m-1}(\zk)$ denotes the cohomology class of the cocycle $u_iv_1\cdots \widehat v_{i}\cdots v_m$ with $v_{i}$ omitted (note that $\Z[\sK]=\Z[v_1,\ldots,v_m]/(v_1\cdots v_m)$). Choosing $i\notin I$ we get that $[u_iv_1\cdots \widehat v_{i}\cdots v_m]$ also represents a nontrivial cohomology class in $H^*(ET_I\times_{T_I}\zk)$ (here we use the fact that $I\ne[m]$). Hence, $H^*(ET_I\times_{T_I}\zk)\to H^*(\zk)$
is surjective.

Now $H^q(ET_I\times_{T_I}\zk)\to H^q(\zk)$ is the edge homomorphism 
\[
  H^*(ET_I\times_{T_I}\zk)\to E_\infty^{0,q}
  \to E_2^{0,q}= H^q(\zk)
\]
of the spectral sequence. Its surjectivity implies $E_2^{0,q}=E_\infty^{0,q}$, that is, all the differentials from the first column are trivial. By the multiplicative structure in the spectral sequence, all other differentials are also trivial. We obtain $H^*(ET_I\times_{T_I}\zk)\cong E_\infty=E_2\cong H^*(BT_I)\otimes H^*(\zk)$, a free $H^*(BT_I)$-module.
\end{proof}

Let $\sK_1$ and $\sK_2$ be simplicial complexes on the sets $V_1$ and $V_2$, respectively. Their \emph{join} is the simplicial complex on $V_1\sqcup V_2$ given by
\[
  \sK_1\mathbin{*}\sK_2=\{I_1\sqcup I_2\subset V_1\sqcup V_2\colon I_1\in\sK_1,\;I_2\in\sK_2\}.
\]

\begin{lemma}\label{tensor}
Let $I_1\subset V_1$, $I_2\subset V_2$, $V=V_1\sqcup V_2$, $I=I_1\sqcup I_2$ and $\sK=\sK_1\mathbin{*}\sK_2$.
Suppose that $H^*_{T_{I_1}}(\mathcal Z_{\sK_1})$
is free as a $H^*(BT_{I_1})$-module, and $H^*_{T_{I_2}}(\mathcal Z_{\sK_2})$
is free as a $H^*(BT_{I_2})$-module. Then $H^*_{T_I}(\zk)$ is free as a $H^*(BT_I)$-module.
\end{lemma}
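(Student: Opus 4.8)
The plan is to deduce this from the K\"unneth theorem via the product structure of the Borel construction. The join of simplicial complexes corresponds to the product of polyhedral products, so, since $\sK=\sK_1\ast\sK_2$, we have $\zk\cong\mathcal Z_{\sK_1}\times\mathcal Z_{\sK_2}$ (see~\cite[Chapter~4]{bu-pa15}). Under the decomposition $T^m=T^{V_1}\times T^{V_2}$ we have $T_I=T_{I_1}\times T_{I_2}$, and its coordinatewise action on $\zk$ is the product of the $T_{I_1}$-action on $\mathcal Z_{\sK_1}$ and the $T_{I_2}$-action on $\mathcal Z_{\sK_2}$. Since Borel constructions commute with finite products of group actions, this gives a homotopy equivalence over $BT_I=BT_{I_1}\times BT_{I_2}$:
\[
  ET_I\times_{T_I}\zk\;\simeq\;\bigl(ET_{I_1}\times_{T_{I_1}}\mathcal Z_{\sK_1}\bigr)\times\bigl(ET_{I_2}\times_{T_{I_2}}\mathcal Z_{\sK_2}\bigr).
\]

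Next I would apply the integral K\"unneth theorem to the right-hand side. By hypothesis each $H^*_{T_{I_j}}(\mathcal Z_{\sK_j})$ is a free module over $H^*(BT_{I_j})=\Z[v_i\colon i\in I_j]$; as the latter ring is free abelian, each $H^*_{T_{I_j}}(\mathcal Z_{\sK_j})$ is a free abelian group, in particular torsion-free, so the Tor term in the K\"unneth sequence vanishes and we obtain a ring isomorphism
\[
  H^*_{T_I}(\zk)\;\cong\;H^*_{T_{I_1}}(\mathcal Z_{\sK_1})\otimes_\Z H^*_{T_{I_2}}(\mathcal Z_{\sK_2}).
\]
By naturality of the K\"unneth map with respect to the projections to the $BT_{I_j}$, this is an isomorphism of modules over $H^*(BT_{I_1})\otimes_\Z H^*(BT_{I_2})=H^*(BT_I)$, the target carrying the external tensor-product module structure.

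It remains to invoke the elementary fact that if $M$ is free over a ring $A$ and $N$ is free over a ring $B$, then $M\otimes_\Z N$ is free over $A\otimes_\Z B$, with basis the products $m_\alpha\otimes n_\beta$ of bases of $M$ and of $N$; taking $A=H^*(BT_{I_1})$ and $B=H^*(BT_{I_2})$ shows $H^*_{T_I}(\zk)$ is a free $H^*(BT_I)$-module. (Alternatively one can argue purely algebraically from Theorem~\ref{cohomology}: $\Z[\sK]=\Z[\sK_1]\otimes_\Z\Z[\sK_2]$ and the polynomial rings split as tensor products along $V=V_1\sqcup V_2$, so a K\"unneth isomorphism for $\Tor$ --- valid since one of the two resolutions being tensored stays flat --- yields the same decomposition.) This lemma is essentially bookkeeping and presents no real obstacle; the points requiring attention are the torsion-freeness hypothesis in the integral K\"unneth theorem, which is exactly where freeness of the two factors enters, and the verification that the external tensor-product module structure matches the one induced by the geometric projection $ET_I\times_{T_I}\zk\to BT_I$.
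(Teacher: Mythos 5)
Your proposal is correct and follows essentially the same route as the paper: split $\zk\cong\mathcal Z_{\sK_1}\times\mathcal Z_{\sK_2}$, identify the Borel construction for $T_I=T_{I_1}\times T_{I_2}$ with the product of the two Borel constructions, apply the integral K\"unneth theorem (using that freeness over $\Z[v_i\colon i\in I_j]$ forces freeness over $\Z$), and conclude via $H^*(BT_I)=H^*(BT_{I_1})\otimes H^*(BT_{I_2})$. Your extra remarks on the external module structure and the alternative Tor-theoretic argument are sound but not needed beyond what the paper records.
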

\begin{proof}
We have $\zk\cong\mathcal Z_{\sK_1}\times\mathcal Z_{\sK_2}$ by~\cite[Proposition~4.1.3]{bu-pa15}. Then
\begin{multline*}
H^*_{T_I}(\zk) \cong
H^*_{T_I}(\mathcal Z_{\mathcal{K}_1} \times \mathcal Z_{\mathcal{K}_2}) = 
H^*\bigl(ET_I \times_{T_I} 
(\mathcal Z_{\mathcal{K}_1} \times 
\mathcal Z_{\mathcal{K}_2})\bigr) \\
\cong H^*\bigl((ET_{I_1} \times_{T_{I_1}} \times \mathcal Z_{\mathcal{K}_1}) \times (ET_{I_2} \times_{T_{I_2}} \times 
\mathcal Z_{\mathcal{K}_2})\bigr)\\
\cong H^*(ET_{I_1} \times_{T_{I_1}} \mathcal Z_{\mathcal{K}_1}) \otimes H^*(ET_{I_2} \times_{T_{I_2}} \mathcal Z_{\mathcal{K}_2})
= H^*_{T_{I_1}}(\mathcal Z_{\mathcal{K}_1})\otimes H^*_{T_{I_2}}(\mathcal Z_{\mathcal{K}_2}).
\end{multline*}
The second-to-last isomorphism above follows by the K\"unneth formula, because $H^*(ET_{I_1} \times_{T_{I_1}} \mathcal Z_{\mathcal{K}_1})$ is a free $\Z$-module (as it is a free $\mathbb{Z}[v_i\colon i\in I_1]$-module). The claim follows, since $H^*(BT_I)=H^*(BT_{I_1})\otimes H^*(BT_{I_2})$.
\end{proof}

\begin{lemma}\label{freesim}
Let $I\notin\sK$. Then $H^*_{T_I}(\zk)$ is not free as a module over $H^*(BT_I)$.
\end{lemma}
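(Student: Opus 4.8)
The plan is to produce a nonzero element of $H^*_{T_I}(\zk)$ that is killed by a nonzero element of $H^*(BT_I)=\Z[v_i\colon i\in I]$. Since this ring is a polynomial ring over $\Z$, hence an integral domain, a free module over it is torsion-free, so the existence of such an element rules out freeness.

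First I would observe that the hypothesis $I\notin\sK$ forces $I\neq\varnothing$, because $\varnothing\in\sK$ by our standing convention; hence $v_I\colonequals\prod_{i\in I}v_i$ is a nonzero element of $\Z[v_i\colon i\in I]$. Next I would invoke the dga model of Theorem~\ref{cohomology}, namely $H^*_{T_I}(\zk)\cong H^*(\Lambda[u_i\colon i\notin I]\otimes\Z[\sK],d)$, in which the $\Z[v_i\colon i\in I]$-action is induced by the inclusion of $\Z[v_i\colon i\in I]$ as a subalgebra. Since $I$ is a non-face of $\sK$, the square-free monomial $v_I$ lies in the ideal $\mathcal I_{\sK}$, so it is already zero in $\Z[\sK]$, hence zero in the dga; consequently $v_I$ annihilates all of $H^*_{T_I}(\zk)$. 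On the other hand $H^*_{T_I}(\zk)\neq 0$, since the class $1\in H^0$ is nonzero (the Borel construction $ET_I\times_{T_I}\zk$ is nonempty and connected). Thus the nonzero module $H^*_{T_I}(\zk)$ is annihilated by the nonzero element $v_I$ of the domain $\Z[v_i\colon i\in I]$, hence has nontrivial torsion and in particular is not free.

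I do not expect any real obstacle here; the only points requiring care are the observation that $I\neq\varnothing$ (so that $v_I$ is genuinely a nonzero, non-invertible element) and the matching of the $\Z[v_i\colon i\in I]$-module structure on the dga model with the standard one. Alternatively, one could argue directly from the $\Tor$-description of Theorem~\ref{cohomology}: the element $v_I$ annihilates the coefficient module $\Z[\sK]$ and therefore $\Tor_{\Z[v_1,\ldots,v_m]}(\Z[v_i\colon i\in I],\Z[\sK])$, while it acts as a nonzerodivisor on $\Z[v_i\colon i\in I]$, which again contradicts freeness.
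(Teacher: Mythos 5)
Your proposal is correct and is essentially the paper's own argument: both exhibit $v_I$ as a nonzero element of $\Z[v_i\colon i\in I]$ that vanishes in $\Z[\sK]$ and hence annihilates the class $1\in H^*_{T_I}(\zk)$, so the module has torsion and cannot be free. The extra observations you make (that $I\neq\varnothing$ so $v_I\neq 0$, and that a free module over a domain is torsion-free) are left implicit in the paper but are exactly the right points of care.
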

\begin{proof}
Take $v_I=\prod_{i\in I}v_i\in H^*(BT_I)$. Then $v_I\cdot 1=[v_I]=0$, because $v_I$ represents zero in
$H^*_{T_I}(\zk)=H(\Lambda[u_i\colon i\notin I]\otimes\Z[\sK])$. Hence, $1$ is a $H^*(BT_I)$-torsion element, and $H^*_{T_I}(\zk)$ is not free as a $H^*(BT_I)$-module.
\end{proof}

Let $\sK$ be a simplicial complex on $V$ and $V'\subset V$. The subcomplex $\sK'=\{I\in\sK\colon I\subset V'\}$ is called a \emph{full subcomplex} (an \emph{induced subcomplex} on~$V'$). Equivalently, $\sK'\subset\sK$ is a full subcomplex if any missing face of $\sK'$ is a missing face of~$\sK$.

\begin{lemma}\label{property}
If $H^*_{T_I}(\zk)$ is a free $H^*(BT_I)$-module and $\sK'$ is a full subcomplex of $\sK$ such that $I\in\sK'$, then
$H^*_{T_I}(\mathcal Z_{\mathcal K'})$ is also a free $H^*(BT_I)$-module.
\end{lemma}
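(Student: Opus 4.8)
The plan is to exhibit a natural retraction at the level of equivariant cohomology that splits the restriction map $H^*_{T_I}(\zk)\to H^*_{T_I}(\mathcal Z_{\sK'})$, so that $H^*_{T_I}(\mathcal Z_{\sK'})$ is realised as a direct summand of the free module $H^*_{T_I}(\zk)$ over $H^*(BT_I)=\Z[v_i\colon i\in I]$; a direct summand of a free module over a polynomial ring need not be free in general, so the argument must produce more than an abstract splitting. The key point is that the inclusion of the full subcomplex $\sK'\hookrightarrow\sK$ admits a one-sided inverse on the combinatorial model. Concretely, on the dga model $\Lambda[u_i\colon i\notin I]\otimes\Z[\sK]$ of Theorem~\ref{cohomology}, sending all generators $v_j$ and $u_j$ with $j\notin V'$ to zero defines a surjection of dgas $\Lambda[u_i\colon i\notin I]\otimes\Z[\sK]\to\Lambda[u_i\colon i\in V'- I]\otimes\Z[\sK']$; this is well defined precisely because $\sK'$ is a \emph{full} subcomplex, so the image of $\mathcal I_{\sK}$ lands in $\mathcal I_{\sK'}$, and it is a retraction of the natural inclusion of dgas in the other direction. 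Passing to cohomology and using Theorem~\ref{cohomology}, we get $\Z[v_i\colon i\in I]$-module maps $H^*_{T_I}(\mathcal Z_{\sK'})\to H^*_{T_I}(\zk)\to H^*_{T_I}(\mathcal Z_{\sK'})$ composing to the identity, so $H^*_{T_I}(\mathcal Z_{\sK'})$ is a direct summand of $H^*_{T_I}(\zk)$ as an $H^*(BT_I)$-module.

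First I would set up these two dga maps carefully and check the well-definedness using fullness, as above; this is the structural heart of the argument. Second, I would invoke Theorem~\ref{cohomology} to identify the induced cohomology maps with the restriction $H^*_{T_I}(\zk)\to H^*_{T_I}(\mathcal Z_{\sK'})$ (induced by the $T_I$-equivariant inclusion $\mathcal Z_{\sK'}\hookrightarrow\zk$, which makes sense because $V'\subset V$ and the inclusion is $T_I$-equivariant for $I\subset V'$) and its section, so that everything is a statement about graded $H^*(BT_I)$-modules. Third — and this is where the hypothesis that $H^*_{T_I}(\zk)$ is free is used in an essential way — I would upgrade "direct summand of a free module" to "free" by the following standard fact over a graded polynomial ring: a finitely generated graded projective module over $\Z[v_i\colon i\in I]$ is free (graded Quillen--Suslin / the graded Nakayama argument), and a direct summand of a free module is projective. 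One must note $H^*_{T_I}(\mathcal Z_{\sK'})$ is finitely generated over $H^*(BT_I)$, which follows from the finiteness of the dga model $R_I(\sK')$ in each degree together with Noetherianity.

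The main obstacle is the last step: a direct summand of a free graded module over $\Z[v_i\colon i\in I]$ is projective, but to conclude it is free one needs the graded projective-implies-free statement over a polynomial ring, and over $\Z$ rather than a field this requires a little care — one can either argue that the module is free because it is a bounded-below, finitely generated graded projective over a graded-connected Noetherian ring whose degree-zero part $\Z$ has no nontrivial projectives (graded Nakayama: a minimal generating set gives a surjection from a free module whose kernel is projective and then, degreewise, zero), or reduce mod a prime and over $\Q$ and reassemble. An alternative route that sidesteps projectivity entirely is to run the Mayer--Vietoris / long exact sequence argument directly: write $\sK$ as built from $\sK'$ by adding the simplices not contained in $V'$ one at a time and track freeness through the resulting short exact sequences of $R_I(-)$-modules, using that a submodule of a free module that is itself a direct summand, with free quotient, is free. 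Either way, the fullness hypothesis is exactly what makes the retraction exist, and the freeness hypothesis on $\zk$ is exactly what promotes summand to free.
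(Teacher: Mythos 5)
Your proof is correct and rests on the same underlying idea as the paper's --- use fullness to produce a retraction and realise $H^*_{T_I}(\mathcal Z_{\mathcal K'})$ as a direct summand of the free module $H^*_{T_I}(\zk)$ --- but you implement the retraction differently and you are more careful at the end. The paper works topologically: for a full subcomplex there is a $T_I$-equivariant retraction of spaces $\mathcal Z_{\mathcal K'}\to\zk\to\mathcal Z_{\mathcal K'}$ (cited from Panov--Veryovkin and Panov--Theriault), and the splitting of $H^*(BT_I)$-modules follows at once. You instead build the retraction on the dga model $\Lambda[u_i\colon i\notin I]\otimes\Z[\sK]$, which is self-contained modulo Theorem~\ref{cohomology}; your maps are correct, though note that fullness is really what makes the \emph{inclusion} $\Lambda[u_i\colon i\in V'-I]\otimes\Z[\sK']\to\Lambda[u_i\colon i\notin I]\otimes\Z[\sK]$ well defined (a non-face of $\sK'$ must stay a non-face of $\sK$), whereas the surjection killing $u_j,v_j$ for $j\notin V'$ is well defined for any subcomplex on $V'$ --- you attribute fullness to the wrong arrow, but the retraction pair is exactly as you describe. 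The genuine added value of your write-up is the last step: the paper simply asserts that a direct summand of a free $H^*(BT_I)$-module is free, while you correctly observe that this gives only a graded projective module and that one must invoke either graded Nakayama (the quotient $M/R_+M$ is a summand of the free abelian group $F/R_+F$, hence free abelian, and the resulting surjection from a graded free module has trivial kernel by a lowest-degree induction) or Quillen--Suslin over $\Z[v_i\colon i\in I]$; either completes the argument, so your proof is, if anything, more complete than the one in the paper.
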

\begin{proof}
Since $\sK'$ is a full subcomplex, there is a retraction $\mathcal Z_{\mathcal K'}\to\zk\to\mathcal Z_{\mathcal K'}$ (see~\cite[Proposition~2.2]{pa-ve16} or~\cite[Lemma~4.2]{pa-th19}), which is $T_I$-equivariant for any $I\subset V'$. It follows that $H^*_{T_I}(\mathcal Z_{\mathcal K'})$ is a direct summand in the free $H^*(BT_I)$-module $H^*_{T_I}(\zk)$. Hence, $H^*_{T_I}(\mathcal Z_{\mathcal K'})$ is also free.
\end{proof}

The equivariant cohomology $H^*_{T_I}(\mathcal{Z}_\mathcal{K})$ may fail to be free as a $H^*(BT_I)$-module even when $I$ is a simplex of~$\sK$:

\begin{example}\label{nfex}
Let $\sK$ be an $m$-cycle (the boundary of an $m$-gon), with
vertices numbered counter-clockwise. Let $I=\{i\}$, so that $T_I$ is the $i$th coordinate circle $S^1_i$. When $m=3$ or $m=4$,
$H^*_{S^1_i}(\zk)$ is free over $\Z[v_i]$ for all~$i$ by Lemma~\ref{boundary} and Lemma~\ref{tensor}. Suppose that
$m\ge 5$. Then the nonzero cohomology class in $H^3_{S^1_m}(\zk)$ represented by the cocycle
$u_1v_3\in\Lambda[u_1,\ldots,u_{m-1}]\otimes\Z[\sK]$ is a
$\Z[v_m]$-torsion element. Indeed, $v_m\cdot
[u_1v_3]=[u_1v_3v_m]=0$, since $v_3v_m=0$ in $\Z[\sK]$ for $m\ge5$. Hence, $H^*_{S^1_m}(\zk)$ is not free as a $\Z[v_m]$-module.
\end{example}

Recall that a \emph{missing face} (a \emph{minimal non-face}) of a simplicial complex $\sK$ on $V$ is a subset $I\subset V$ such that $I\notin\sK$ but every proper subset of $I$ is in~$\sK$. In other words, $I$ is a missing face if $\partial\varDelta(I)$ is a subcomplex of $\sK$, but $\varDelta(I)$ is not. 
We denote by $\MF(\sK)$ the set of missing faces of~$\sK$.

Generalising Example~\ref{nfex}, we have

\begin{lemma}\label{condition}
Let $I_1$ and $I_2$ be missing faces of $\sK$, and suppose that $I=I_1-I_2$ is nonempty. Then $H^*_{T_I}(\zk)$ is not free as a module over $H^*(BT_I)$.
\end{lemma}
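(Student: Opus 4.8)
The plan is to exhibit a single nonzero class in $H^*_{T_I}(\zk)$ that is annihilated by the element $v_I=\prod_{i\in I}v_i$ of the polynomial ring $H^*(BT_I)=\Z[v_i\colon i\in I]$; since $v_I\ne0$ and a free module over an integral domain is torsion-free, this shows that $H^*_{T_I}(\zk)$ is not free. All computations take place in the dga model $\bigl(\Lambda[u_i\colon i\notin I]\otimes\Z[\sK],d\bigr)$ of Theorem~\ref{cohomology}.

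Write $J=I_1\cap I_2$ and $L=I_2\setminus I_1$, so that $I_1=I\sqcup J$, $I_2=J\sqcup L$, with $I,J,L$ pairwise disjoint. Since $I_1$ is a \emph{minimal} non-face and $I=I_1\setminus I_2$ is nonempty, $I_2$ cannot be a subset of $I_1$, so $L\ne\varnothing$; note also $I_2\cap I=\varnothing$, so all generators $u_i$ with $i\in I_2$ occur in $\Lambda[u_i\colon i\notin I]$. Fix $a\in L$ and put
\[
  z\colonequals u_a\,v_{I_2\setminus a}\ \in\ \Lambda[u_i\colon i\notin I]\otimes\Z[\sK].
\]
Because every proper subset of the missing face $I_2$ is a simplex of $\sK$, the monomial $v_{I_2\setminus a}$ is nonzero in $\Z[\sK]$; and $dz=v_a v_{I_2\setminus a}=v_{I_2}=0$, so $z$ is a cocycle. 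Moreover $v_I\cdot z=u_a\,v_{(I\cup I_2)\setminus a}$, and since $a\in L$ we have $(I\cup I_2)\setminus a\supseteq I\cup J=I_1$, whence $v_{(I\cup I_2)\setminus a}=0$ in $\Z[\sK]$ because $I_1\notin\sK$. Therefore $v_I\cdot[z]=0$ in $H^*_{T_I}(\zk)$.

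The remaining point — that $[z]\ne0$ — is the crux of the argument, and I would prove it by restricting to a single multidegree. The algebra $\Lambda[u_i\colon i\notin I]\otimes\Z[\sK]$ carries a $\Z^m$-grading in which $u_i$ and $v_i$ both have multidegree $e_i$, and $d$ preserves it, so $H^*_{T_I}(\zk)$ is $\Z^m$-graded and its component in multidegree $e_{I_2}$ is the cohomology of the subcomplex of elements of that multidegree. A monomial of multidegree $e_{I_2}$ must have the form $u_A v_{I_2\setminus A}$ for some $A\subseteq I_2$, and it is nonzero in $\Z[\sK]$ precisely when $A\ne\varnothing$ — again because $I_2$ is a minimal non-face. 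In cohomological degree $2|I_2|-1$ these monomials are the cocycles $x_b\colonequals u_b v_{I_2\setminus b}$ ($b\in I_2$); in degree $2|I_2|-2$ they are the $u_{\{i,j\}}v_{I_2\setminus\{i,j\}}$, and $d\bigl(u_{\{i,j\}}v_{I_2\setminus\{i,j\}}\bigr)=x_j-x_i$. Hence every coboundary in degree $2|I_2|-1$ lies in the kernel of the augmentation $\varepsilon$ sending each $x_b$ to $1$. As $z=x_a$ has $\varepsilon(z)=1$, it is not a coboundary, so $[z]\ne0$. This finishes the proof: $[z]$ is a nonzero $H^*(BT_I)$-torsion element, so $H^*_{T_I}(\zk)$ is not free.

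The only step requiring genuine care is the non-vanishing of $[z]$: one must verify that in multidegree $e_{I_2}$ the monomials are exactly the $u_A v_{I_2\setminus A}$ and that all $v_{I_2\setminus A}$ with $A\ne\varnothing$ survive the relations of $\Z[\sK]$, both of which rest on $I_2$ being a \emph{minimal} non-face, and one should track the differential on the length-two exterior monomials — though the augmentation $\varepsilon$ renders the signs irrelevant. (An alternative is to restrict along the $T_I$-equivariant inclusion $\mathcal Z_{\partial\varDelta(I_2)}\hookrightarrow\zk$ coming from the full subcomplex on $I_2$, using $\mathcal Z_{\partial\varDelta(I_2)}\cong S^{2|I_2|-1}$; but the multigraded computation is more direct and self-contained.)
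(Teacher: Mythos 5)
Your proof is correct and follows essentially the same route as the paper: the same torsion cocycle $u_a v_{I_2\setminus a}$ with $a\in I_2\setminus I_1$, and the same computation $v_I\cdot[u_a v_{I_2\setminus a}]=[u_a v_{I_1}v_{I_2\setminus I_1\setminus a}]=0$. The only difference is that the paper simply asserts the nonvanishing of the class, whereas you justify it via the multidegree-$e_{I_2}$ component and the augmentation argument — a welcome addition, and your verification is correct.
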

\begin{proof}
Since $I_1$ and $I_2$ are distinct missing faces, we have $I_2\not\subset I_1$. Take $j\in I_2-I_1$. Then $j\notin I$. The cocycle $u_jv_{I_2-j}$ represents a nontrivial cohomology class in
$H^*_{T_I}(\zk)=H(\Lambda[u_i\colon i\notin I]\otimes\Z[\sK])$.

We claim that the cohomology class $[u_jv_{I_2-j}]\in H^*_{T_I}(\zk)$ is a $H^*(BT_I)$-torsion element. Indeed, take $v_I=\prod_{i\in I}v_i\in H^*(BT_I)$. Then
\[
  v_I\cdot[u_jv_{I_2-j}]=[u_jv_Iv_{I_2-j}]=[u_jv_{I_1}v_{I_2-I_1-j}]=0,
\]
since $v_{I_1}=0$ in $\Z[\sK]$. Hence, $H^*_{T_I}(\zk)$ is not free as a $H^*(BT_I)$-module.
\end{proof}

\begin{theorem}\label{suffic}
Let $\sK$ be a simplicial complex on a finite set $V$. The following conditions are equivalent:

\begin{itemize}
\item[(a)] For any $I\in\sK$, the equivariant cohomology $H^*_{T_I}(\mathcal{Z}_\mathcal{K})$ is a free module over $H^*(BT_I)$.

\smallskip

\item[(b)] There is a partition $V=V_1\sqcup\cdots\sqcup V_p\sqcup U$ such that
\[
  \sK=\partial\varDelta(V_1)\mathbin{*}\cdots
  \mathbin{*}\partial\varDelta(V_p)\mathbin{*}
  \varDelta(U),
\]
where $\varDelta(U)$ denotes a full simplex on $U$, and $\partial\varDelta(V_i)$ denotes the boundary of a simplex on~$V_i$. 

\smallskip

\item[(c)] The rational face ring $\mathbb Q[\sK]$ is a complete intersection ring (the quotient of the polynomial ring by an ideal generated by a regular sequence).
\end{itemize}
\end{theorem}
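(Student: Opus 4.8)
The plan is to prove the implications $(b)\Rightarrow(a)$, $(a)\Rightarrow(b)$, and $(b)\Leftrightarrow(c)$, the point being that both $(a)$ and $(c)$ get traced back to the purely combinatorial condition that any two distinct missing faces of $\sK$ are disjoint. For $(b)\Rightarrow(a)$, given a decomposition $\sK=\partial\varDelta(V_1)\mathbin{*}\cdots\mathbin{*}\partial\varDelta(V_p)\mathbin{*}\varDelta(U)$ and a simplex $I\in\sK$, I would split $I=I_1\sqcup\cdots\sqcup I_p\sqcup I_U$ with $I_k=I\cap V_k$ and $I_U=I\cap U$; since $I\in\sK$ this forces $I_k\in\partial\varDelta(V_k)$, i.e.\ $I_k\subsetneq V_k$, and $I_U\subset U$. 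Using the product decomposition $\zk\cong\mathcal Z_{\partial\varDelta(V_1)}\times\cdots\times\mathcal Z_{\partial\varDelta(V_p)}\times\mathcal Z_{\varDelta(U)}$ together with the matching splitting $T_I\cong T_{I_1}\times\cdots\times T_{I_p}\times T_{I_U}$, the claim then follows by combining Lemma~\ref{boundary} (for each $\mathcal Z_{\partial\varDelta(V_k)}$ with $I_k\subsetneq V_k$), Lemma~\ref{simplex} (for $\mathcal Z_{\varDelta(U)}$), and iterated use of Lemma~\ref{tensor}; the degenerate cases ($|V_k|=1$ forces $I_k=\varnothing$, and $p=0$ gives $\sK=\varDelta(V)$) are covered the same way.

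For $(a)\Rightarrow(b)$, suppose some two distinct missing faces $I_1\ne I_2$ of $\sK$ meet. Neither contains the other since both are minimal non-faces, so $I\colonequals I_1-I_2\ne\varnothing$; and $I_1\cap I_2\ne\varnothing$ gives $I\subsetneq I_1$, hence $I\in\sK$. By Lemma~\ref{condition}, $H^*_{T_I}(\zk)$ is then not free over $H^*(BT_I)$, contradicting $(a)$. Therefore the missing faces $W_1,\dots,W_r$ of $\sK$ are pairwise disjoint. Setting $U=V-(W_1\cup\cdots\cup W_r)$, a subset $I\subset V$ lies in $\sK$ precisely when it contains no $W_i$; since the $W_i$ together with $U$ partition $V$, this says $I\cap W_i\subsetneq W_i$ for every $i$, which is exactly the assertion $\sK=\partial\varDelta(W_1)\mathbin{*}\cdots\mathbin{*}\partial\varDelta(W_r)\mathbin{*}\varDelta(U)$, i.e.\ $(b)$. (A ghost vertex is a one-element missing face, automatically disjoint from all others; $r=0$ gives $\sK=\varDelta(V)$.)

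For $(b)\Leftrightarrow(c)$: if $(b)$ holds, then $\mathcal I_\sK=(v_{V_1},\dots,v_{V_p})$ is generated by monomials in pairwise disjoint sets of variables, which form a regular sequence, so $\Q[\sK]$ is a complete intersection. Conversely, if $\Q[\sK]$ is a complete intersection, then the height of $\mathcal I_\sK$ equals its minimal number of generators $r$; since $\Q[v_1,\dots,v_m]$ is Cohen--Macaulay, the minimal monomial generators $v_{W_1},\dots,v_{W_r}$ of $\mathcal I_\sK$, where $W_1,\dots,W_r=\MF(\sK)$, then form a regular sequence (in any order). If $v_k$ divided both $v_{W_i}$ and $v_{W_j}$ for $i\ne j$, writing $v_{W_i}=v_k g_i$ we would have $v_{W_j}g_i=v_{W_i}g_j\in(v_{W_i})$ while $g_i\notin(v_{W_i})$ for degree reasons, so $v_{W_j}$ would be a zero-divisor modulo $v_{W_i}$ --- impossible. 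Hence the $W_i$ are pairwise disjoint, and the combinatorial description from the previous paragraph gives $(b)$.

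The step I expect to demand the most care is $(a)\Rightarrow(b)$: the failure of freeness is immediate from Lemma~\ref{condition}, but one must check that this obstruction is exhaustive, i.e.\ that "distinct missing faces are disjoint" genuinely forces the join decomposition, with correct bookkeeping of ghost vertices and of the empty pieces $U=\varnothing$ or $r=0$. On the algebraic side, the point that needs a clean justification is exactly why a complete-intersection monomial (here, Stanley--Reisner) ideal has its minimal generators supported on pairwise disjoint sets of variables, and why one may pass from "$\mathcal I_\sK$ is generated by a regular sequence" to "the minimal monomial generators form a regular sequence".
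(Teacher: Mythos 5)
Your proposal is correct and follows essentially the same route as the paper: Lemmas~\ref{simplex}, \ref{boundary} and~\ref{tensor} for (b)$\Rightarrow$(a), Lemma~\ref{condition} applied to two intersecting missing faces for (a)$\Rightarrow$(b), and the zero-divisor relation between overlapping square-free monomials for (b)$\Leftrightarrow$(c). The only differences are that you supply details the paper leaves implicit --- checking that $I=I_1-I_2$ is itself a simplex of $\sK$ (needed since (a) only quantifies over $I\in\sK$), spelling out why pairwise disjoint missing faces force the join decomposition, and justifying via height and Cohen--Macaulayness why one may replace an arbitrary regular sequence generating $\mathcal I_{\sK}$ by the minimal monomial generators --- all of which are correct refinements rather than deviations.
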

\begin{proof}
(a)$\Rightarrow$(b) We have $\Q[\sK]=\Q[v_1,\ldots,v_m]/(t_1,\ldots,t_p)$ where  $t_k=\prod_{i\in V_k}v_i$ is a square-free monomial and $V_k$ is a missing face of $\sK$, for $k=1,\ldots,p$. Suppose some of these missing faces intersect nontrivially, say, $V_1\cap V_2\ne\varnothing$. Then $I=V_1-V_2$ is nonempty, and $H^*_{T_I}(\zk)$ is not a free $H^*(BT_I)$-module by Lemma~\ref{condition}. A contradiction.  Hence, $V_1,\ldots,V_p$ are pairwise non-intersecting, so $\sK$ is as described in~(b).

\smallskip

(b)$\Rightarrow$(a) Write $I=I_1\sqcup\cdots\sqcup I_p\sqcup J$, where $I_k\subsetneq V_k$, $J\subset U$. Then $H^*_{T_I}(\mathcal{Z}_\mathcal{K})$ is a free $H^*(BT_I)$-module
by Lemmas~\ref{simplex}, \ref{boundary} and~\ref{tensor}.

\smallskip

(b)$\Rightarrow$(c) Recall~\cite[\S A.3]{bu-pa15} that a sequence of homogeneous elements $(t_1,\ldots,t_k)$ of positive degree in $\Q[v_1,\ldots,v_m]$ is a \emph{regular sequence} if $t_{i+1}$ is not a zero divisor in $\Q[v_1,\ldots,v_m]/(t_1,\ldots,t_i)$ for $0\le i< k$. If $\sK$ is as in~(b), then $\Q[\sK]=\Q[v_1,\ldots,v_m]/(t_1,\ldots,t_p)$, where $m=|V|$ and $t_k=\prod_{i\in V_k}v_i$ for $k=1,\ldots,p$. Then $(t_1,\ldots,t_p)$ is a regular sequence, so $\Q[\sK]$ is a complete intersection ring.

\smallskip

(c)$\Rightarrow$(b) Suppose $\Q[\sK]=\Q[v_1,\ldots,v_m]/(t_1,\ldots,t_p)$ where $(t_1,\ldots,t_p)$ is a regular sequence. We can assume that $t_k=\prod_{i\in V_k}v_i$ where $V_k$ is a missing face of $\sK$, for $k=1,\ldots,p$. Suppose some of these missing faces intersect nontrivially, say, $V_1\cap V_2\ne\varnothing$. Then $t_2\cdot\prod_{i\in V_1-V_2}v_i=t_1\cdot\prod_{j\in V_2-V_1}v_j$, so $t_2$ is a zero divisor in $\Q[v_1,\ldots,v_m]/(t_1)$. A contradiction. Hence, $V_1,\ldots,V_p$ are pairwise non-intersecting, so $\sK$ is as described in~(b).
\end{proof}
 
The equivalence (b)$\Leftrightarrow$(c) of Theorem~\ref{suffic} was noted in~\cite[\S5]{no-ra05}.

Recall that $\sK$ is called a \emph{flag complex} if each of its missing faces has two vertices. A simplicial complex $\sK$ is flag if and only if it has no ghost vertices and any set of vertices of $\sK$ which are pairwise connected by edges spans a simplex. In the case of flag complexes we have the following specification of the criterion in Theorem~\ref{suffic}.

\begin{theorem}\label{flag}
Let $\mathcal{K}$ be a flag complex on~$V$. Then the following conditions are equivalent:
\begin{itemize}
\item[(a)]  $H^*_{S^1_i}(\mathcal{Z}_\mathcal{K})$ is a free module over $\Z[v_i]$ for all $i$.

\item[(b)]$\sK=\partial\varDelta(V_1)\mathbin{*}\cdots\mathbin{*}
\partial\varDelta(V_p)\mathbin{*} \varDelta(U)$ where $|V_k|=2$ for $k=1,\ldots,p$.
\end{itemize}
\end{theorem}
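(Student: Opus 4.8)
The plan is to deduce this from Theorem~\ref{suffic}. The implication (b)$\Rightarrow$(a) is immediate: a complex of the stated form is in particular of the form in Theorem~\ref{suffic}(b), hence Theorem~\ref{suffic}(a) holds, and restricting to the singletons $I=\{i\}$ gives freeness of $H^*_{S^1_i}(\mathcal Z_\mathcal K)$ over $\mathbb Z[v_i]$ for every $i$. The real content is (a)$\Rightarrow$(b), and here the point is that for a \emph{flag} complex the weak-looking hypothesis (a)---freeness only for the rank-one subtori---already forces the full strength of Theorem~\ref{suffic}(b), and the extra flag condition pins down $|V_k|=2$.

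First I would run the (a)$\Rightarrow$(b) argument of Theorem~\ref{suffic} in the present setting. Since $\mathcal K$ is flag it has no ghost vertices, so every missing face is an edge $V_k=\{a_k,b_k\}$, and $\mathcal I_\mathcal K$ is generated by the quadratic monomials $t_k=v_{a_k}v_{b_k}$. The key observation is that Lemma~\ref{condition} only needs the circle subtorus: if two distinct missing faces $V_1=\{a_1,b_1\}$ and $V_2=\{a_2,b_2\}$ share a vertex, then $I=V_1-V_2$ is a single vertex $\{a_1\}$ (with $b_1=$ the shared vertex, say), and Lemma~\ref{condition} produces an $H^*(BS^1_{a_1})=\mathbb Z[v_{a_1}]$-torsion class in $H^*_{S^1_{a_1}}(\mathcal Z_\mathcal K)$, contradicting~(a). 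Hence the edges in $\MF(\mathcal K)$ are pairwise disjoint. Setting $V_k$, $k=1,\dots,p$, to be these missing-face edges and $U=V-\bigsqcup_k V_k$, the description of the face ring gives $\mathbb Z[\mathcal K]=\mathbb Z[v_1,\dots,v_m]/(v_{a_1}v_{b_1},\dots,v_{a_p}v_{b_p})$ with the supports pairwise disjoint, which is exactly the face ring of $\partial\varDelta(V_1)*\cdots*\partial\varDelta(V_p)*\varDelta(U)$; since a simplicial complex is determined by its face ring (equivalently, by its set of missing faces), $\mathcal K$ has the claimed form with $|V_k|=2$.

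The main obstacle---such as it is---is checking that Lemma~\ref{condition} really applies, i.e.\ that two \emph{distinct} missing edges sharing a vertex give a nonempty $I_1-I_2$ and that the vertex $j\in I_2-I_1$ needed in that lemma exists; both are automatic once the two edges are distinct and not disjoint, since then each contains a vertex not in the other. One should also note the degenerate cases in~(b): $p=0$ (so $\mathcal K=\varDelta(V)$, a simplex, with no missing faces) and $U=\varnothing$ are allowed, and the argument covers them uniformly. No genuine difficulty remains; the theorem is essentially a corollary of Theorem~\ref{suffic} together with the observation that its obstruction Lemma~\ref{condition} is already visible on coordinate circles when the obstructing missing faces are edges.
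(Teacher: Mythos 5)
Your proposal is correct and follows essentially the same route as the paper: (b)$\Rightarrow$(a) is read off from Theorem~\ref{suffic}, and (a)$\Rightarrow$(b) uses that for a flag complex all missing faces are edges, so two intersecting distinct missing faces would have a singleton difference and Lemma~\ref{condition} would contradict freeness over a single $\Z[v_i]$. The extra checks you record (that the difference is nonempty, that a complex is determined by its missing faces) are exactly the implicit steps in the paper's argument.
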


\begin{proof}
Implication (b)$\Rightarrow$(a) follows from Theorem~\ref{suffic}, so we only need to prove (a)$\Rightarrow$(b). Let $V_1$, $V_2$ be missing faces. Then $|V_1|=|V_2|=2$. If $V_1\cap V_2\ne\varnothing$, then $V_1- V_2=\{i\}$ for some $i\in V$. Then $H^*_{S^1_i}(\mathcal{Z}_\mathcal{K})$ is not free as a 
$\Z[v_i]$-module by Lemma~\ref{condition}. This contradiction shows that all missing faces of $\sK$ are pairwise non-intersecting, so $\sK$ is as in~(b).
\end{proof}

Here is an example showing that the equivalence of Theorem~\ref{flag} does not hold in the non-flag case.

\begin{example}
Let $\sK$ be the simplicial complex on $5$ vertices with $\MF(\sK) = \{I_1, I_2\}$, where $I_1 = \{1, 2, 3\}$ and $I_2 = \{3, 4, 5\} $.
Then $H^*_{T_I}(\zk)$ is not free as a $H^*(BT_I)$-module for $I = \{1, 2\}$ (or for $I = \{4, 5\}$) due to Lemma~\ref{condition}. 
However, $H^*_{S^1_i}(\zk)$ is a free $H^*(BS^1_i)$-module for all~$i$.
Indeed, it can be shown by the methods of~\cite[\S8]{gr-th16} or~\cite{ab-pa19} that $\zk \cong S^5 \vee S^5 \vee S^8$. The ordinary cohomology is generated by the following monomials in the Koszul algebra $\Lambda[u_1,\ldots,u_5]\otimes\Z[\sK]$:
\[
  H^*(\zk) = \Z\langle1,[u_k v_{I_1-k}], 
  [u_j v_{I_2-j}], [u_{i_1} u_{i_2} v_{[5]-i_1-i_2}]\rangle,
\]
where $k\in I_1$, $j\in I_2$, $[u_k v_{I_1-k}], [u_j v_{I_2-j}]\in H^5(\zk)$, $i_1\in I_1 - I_2$, $i_2\in I_2 - I_1$ and $[u_{i_1} u_{i_2} v_{[5]-i_1-i_2}]\in H^8(\zk)$.

Since both $I_1-I_2$ and $I_2-I_1$ contain two elements, we can choose $k,j,i_1,i_2$ such that
$i\notin \{k, j, i_1, i_2\}$. Then the monomials $u_k v_{I_1-k}$, $u_j v_{I_2-j}$, $u_{i_1} u_{i_2} v_{[5]-i_1-i_2}$ represent nontrivial cohomology classes in $H^*_{S^1_i}(\zk)$. This implies that the homomorphism $H^*_{S^1_i}(\zk)\to H^*(\zk)$ is surjective. Therefore, the spectral sequence of the bundle $ES^1_i \times_{S^1_i} \zk \to BS^1_i$ collapses at the $E_2$ page, as in the proof of Lemma~\ref{boundary}. It follows that $H^*_{S^1_i}(\zk) \cong H^*(BS^1_i) \otimes H^*(\zk)$, a free $H^*(BS^1_i)$-module.
\end{example}

The equivalence similar to that of Theorem~\ref{flag} also holds when $\sK$ is one-dimensional.

\begin{theorem}\label{graph}
Let $\mathcal{K}$ be a one-dimensional complex (a simple graph). Then the following conditions are equivalent: 
\begin{itemize}
\item[(a)]
$H^*_{S^1_i}(\mathcal{Z}_{\mathcal{K}})$ is a free module over $\mathbb{Z}[v_i]$ for any $i$.
\item[(b)]
$\mathcal{K}$ is the one of the following: $\partial\varDelta^2$, $\partial\varDelta^1\mathbin{*}\partial\varDelta^1$, $\partial\varDelta^1$, $\varDelta^1$, $\partial\varDelta^1*\varDelta^0$, $\varDelta^0$.
\end{itemize}
\end{theorem}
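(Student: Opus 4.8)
The plan is to reduce everything to the criterion already established in Theorem~\ref{suffic}, and then enumerate the one-dimensional complexes it allows. The implication (b)$\Rightarrow$(a) is immediate: each complex listed in~(b) is a join of boundaries of $1$- or $2$-simplices with a simplex (of dimension $\le 1$), hence has pairwise non-intersecting missing faces, so by Lemmas~\ref{simplex},~\ref{boundary} and~\ref{tensor} (or directly by Theorem~\ref{suffic}) all the $H^*_{S^1_i}(\zk)$ are free. So the work is in (a)$\Rightarrow$(b).

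For (a)$\Rightarrow$(b) the first step is to note that condition~(a) here is formally weaker than condition~(a) of Theorem~\ref{suffic}: we only test circle subgroups $S^1_i=T_{\{i\}}$, not all $T_I$ with $I\in\sK$. So I cannot quote Theorem~\ref{suffic} as a black box; instead I re-run its argument, but now extracting the consequence only from Lemma~\ref{condition} applied with $|I|=1$. Concretely: $\sK$ being one-dimensional, every missing face has $1$ or $2$ vertices; a $1$-vertex missing face is a ghost vertex $\{i\}$, and by Lemma~\ref{freesim} (with $I=\{i\}\notin\sK$) freeness of $H^*_{S^1_i}(\zk)$ fails — so (a) forces $\sK$ to have \emph{no ghost vertices}, i.e. all missing faces have exactly two vertices. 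Next, if two distinct missing faces $V_1=\{a,b\}$ and $V_2=\{b,c\}$ share a vertex, then $V_1-V_2=\{a\}$ is a single vertex, and Lemma~\ref{condition} with $I=\{a\}$ shows $H^*_{S^1_a}(\zk)$ is not free. Hence under~(a) all missing faces of $\sK$ are pairwise disjoint $2$-element sets. Writing $V=V_1\sqcup\cdots\sqcup V_p\sqcup U$ with $V_k$ the missing faces, one gets $\sK=\partial\varDelta(V_1)*\cdots*\partial\varDelta(V_p)*\varDelta(U)$ exactly as in Theorem~\ref{suffic}, with every $|V_k|=2$.

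The last step is a dimension count to see which such joins are actually one-dimensional. The join $\partial\varDelta(V_1)*\cdots*\partial\varDelta(V_p)*\varDelta(U)$ with $|V_k|=2$ has dimension $\sum_{k=1}^p\dim\partial\varDelta(V_k)+\dim\varDelta(U)+(\text{number of nonempty factors})-1 = 0\cdot p + (|U|-1) + (p+\varepsilon) - 1$ where $\varepsilon=1$ if $U\ne\varnothing$ and $0$ otherwise; more cleanly, each $\partial\varDelta^1$ factor contributes a vertex ($0$-dimensional sphere = two points), joining $r$ of them plus a simplex $\varDelta(U)$ gives dimension $(r-1)+|U|$ when $U\ne\varnothing$ and $r-1$ when $U=\varnothing$. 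Requiring this to be $\le 1$, and recalling that by convention $\varDelta^{-1}=\varnothing$ and $\varDelta^0$ is a point, the possibilities are: $p=0$, giving $\varDelta(U)$ with $|U|\le 2$, i.e. $\varDelta^0$ or $\varDelta^1$ (the empty complex and a point with a ghost vertex are excluded since there are no ghost vertices; a single point $\varDelta^0$ on a one-element set is allowed); $p=1$, giving $\partial\varDelta^1*\varDelta(U)$ with $|U|\le 1$, i.e. $\partial\varDelta^1$ or $\partial\varDelta^1*\varDelta^0$; $p=2$, giving $\partial\varDelta^1*\partial\varDelta^1$ (here $U=\varnothing$ forced); and the case of a single $2$-element set is not $1$-dimensional but $\partial\varDelta(V_1)$ with $|V_1|=3$, i.e.\ $\partial\varDelta^2$, is — wait, here $|V_k|$ must be $2$ under our analysis, so $\partial\varDelta^2$ does \emph{not} arise from a disjoint-$2$-set decomposition. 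This is the one subtlety: $\partial\varDelta^2$ is itself a $3$-cycle, whose unique missing face is the single $3$-element set $\{1,2,3\}$, which has three vertices, not two — so it is not a flag complex and is not covered by the disjointness-of-$2$-faces argument. I therefore handle it separately: $\partial\varDelta^2$ has exactly one missing face, so Lemma~\ref{condition} never applies, and freeness of each $H^*_{S^1_i}(\zk)$ holds by Lemma~\ref{boundary} (as $\{i\}\subsetneq[3]$). Conversely, any one-dimensional $\sK$ all of whose missing faces have two vertices, except possibly for a single $3$-vertex missing face, must — to be one-dimensional with a $3$-vertex missing face — be exactly $\partial\varDelta^2$. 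Assembling the list $\partial\varDelta^2$, $\partial\varDelta^1*\partial\varDelta^1$, $\partial\varDelta^1$, $\varDelta^1$, $\partial\varDelta^1*\varDelta^0$, $\varDelta^0$ completes~(b).

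The main obstacle is precisely this last point: the statement of Theorem~\ref{graph} is \emph{not} a literal specialization of Theorem~\ref{suffic} (its condition~(a) is weaker, and $\partial\varDelta^2$ slips outside the "all missing faces are edges'' framework), so one must carefully argue that the only one-dimensional complex with a non-edge missing face that still satisfies~(a) is $\partial\varDelta^2$, and that no complex with $\ge 2$ missing faces, one of which has $3$ vertices, can be one-dimensional at all. Everything else is the bookkeeping of which low-dimensional joins stay one-dimensional.
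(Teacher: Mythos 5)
Your overall strategy (classify by missing faces rather than by graph structure, note that condition~(a) here is weaker than Theorem~\ref{suffic}(a) and therefore only invoke Lemma~\ref{condition} with $|I_1-I_2|=1$, then do a dimension count) is sound and is a legitimate alternative to the paper's organisation into cases (trees, forests, $3$-cycles, $4$-cycles, longer cycles). The treatment of ghost vertices via Lemma~\ref{freesim}, of pairs of intersecting $2$-element missing faces via Lemma~\ref{condition}, and the final enumeration of joins of dimension at most one are all correct.

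However, there is a genuine gap in the handling of $3$-element missing faces, and your attempted patch contains a false claim. Your opening assertion that every missing face of a one-dimensional complex has $1$ or $2$ vertices is wrong: since a one-dimensional complex has no $2$-faces, \emph{every} $3$-clique of the underlying graph is a $3$-element missing face. You notice this for $\partial\varDelta^2$ itself, but the statement you then lean on --- ``no complex with $\ge 2$ missing faces, one of which has $3$ vertices, can be one-dimensional at all'' --- is false: a triangle on $\{1,2,3\}$ with a pendant edge $\{1,4\}$ is one-dimensional and has missing faces $\{1,2,3\}$, $\{2,4\}$, $\{3,4\}$. Likewise the claim that a one-dimensional complex with a $3$-vertex missing face ``must be exactly $\partial\varDelta^2$'' is only true \emph{after} one shows that every such complex on $\ge 4$ vertices violates~(a); you assert this but do not prove it. What is actually needed (and what the paper's Case~3 supplies by enumerating the four ways a fourth vertex can attach to a triangle) is: if $\sK$ is one-dimensional, contains a $3$-cycle on $\{1,2,3\}$ and has a further vertex $4$, then depending on how many of $1,2,3$ are adjacent to $4$ one always finds two missing faces $I_1,I_2$ --- either two non-edges through $4$, or two $3$-cliques such as $\{1,2,3\}$ and $\{1,2,4\}$ --- with $|I_1-I_2|=1$, so that Lemma~\ref{condition} (together with Lemma~\ref{property} if one works in an induced subgraph) kills freeness. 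Without this enumeration the implication (a)$\Rightarrow$(b) is not established for graphs containing triangles. Note also that your reduction to the join form $\partial\varDelta(V_1)*\cdots*\partial\varDelta(V_p)*\varDelta(U)$ uses that a complex is determined by \emph{all} of its missing faces, so it is only valid once $3$-element missing faces have been excluded; this must be done before, not after, the dimension count.
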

\begin{proof}
Implication (b)$\Rightarrow$(a) follows from Theorem~\ref{suffic}, so we only need to prove (a)$\Rightarrow$(b). We consider several cases.

\smallskip

\emph{Case 1: $\sK$ is a tree.} If it has no more than three vertices, then $\sK$ is $\varDelta^1$, $\partial\varDelta^1*\varDelta^0$ or $\varDelta^0$. In each of these cases $H^*_{S^1_i}(\mathcal{Z}_{\mathcal{K}})$ is a free $\mathbb{Z}[v_i]$-module by Theorem \ref{suffic}. 


Suppose $\sK$ has more than three vertices. Then $\sK$ has a connected induced subgraph $\sK_1$ on $4$ vertices, which has the form $\quad\begin{picture}(12,5) \put(0,0){\circle*{1}}
\put(5,0){\circle*{1}} \put(10,0){\circle*{1}}
\put(5,5){\circle*{1}} \put(0,0){\line(1,0){10}}
\put(5,0){\line(0,1){5}}
\end{picture}$ or 
$\;\begin{picture}(15,2)
\put(0,1){\circle*{1}} \put(5,1){\circle*{1}}
\put(10,1){\circle*{1}} \put(15,1){\circle*{1}}
\put(0,1){\line(1,0){15}}\end{picture}\quad$. In both cases, there are $I_1, I_2 \in\MF(\sK_1)$ such that $I_1 - I_2 = \{i\}$ for some~$i$. 
Then $H^*_{S^1_i}(\mathcal Z_{\mathcal K_1})$ is not free over $\Z[v_i]$ by Lemma~\ref{condition}, and $H^*_{S^1_i}(\zk)$ is also not free by Lemma~\ref{property}. A contradiction. 

\smallskip

\emph{Case 2: $\sK$ is a disjoint union of trees.} If $\sK$ has two vertices, then $\sK = \partial\varDelta^1$.

Suppose $\sK$ has more than two vertices. Write $\sK = \sK_1 \sqcup \cdots \sqcup \sK_s$ where each $\sK_j$ is a tree. Then each $\sK_j$ has at most three vertices by Case~1. Take $I_1 = \{i, j\}$, $I_2 = \{k, j\}$, where $i\in \sK_1$, $j\in \sK_2$ and $\{k, j\} \notin \sK$. Then $I_1,I_2\in\MF(\sK)$ and $I_1 - I_2 = \{i\}$. Hence, $H^*_{S^1_i}(\zk)$ is not a free $\Z[v_i]$-module by Lemma \ref{condition}. A contradiction.

\smallskip

\emph{Case 3: $\sK$ has a $3$-cycle.}
If $\sK$ is a $3$-cycle, then $\sK=\partial\varDelta^2$. 

Suppose $\sK$ has at least $4$ vertices. Consider the induced subgraph on $4$ vertices containing a $3$-cycle. There are four cases:
\[
\begin{tikzpicture}
\pgfmathsetmacro{\w}{1};
\pgfmathsetmacro{\mar}{1};
\pgfmathsetmacro{\h}{1/sqrt(3)};
\pgfmathsetmacro{\skip}{\mar + 2*\w};
\draw 
(0, 0) coordinate[] (a) -- 
(\w, 3*\h) coordinate[] (b) -- 
(2*\w, 0) coordinate[] (c) -- cycle;
\filldraw (0,0) circle (2pt) node[anchor=east,black]{1};
\filldraw (\w,3*\h) circle (2pt) node[anchor=east,black]{2};
\filldraw (2*\w,0) circle (2pt) node[anchor=west,black]{3};
\filldraw (\w,\h) circle (2pt) node[anchor=west,black]{4};
\draw 
(\skip, 0) coordinate[] (a) -- 
(\skip+\w, 3*\h) coordinate[] (b) -- 
(\skip+2*\w, 0) coordinate[] (c) -- cycle;
\draw (\skip, 0) coordinate[] (a) -- (\skip + \w, \h) coordinate[] (d);
\filldraw (\skip,0) circle (2pt) node[anchor=east,black]{1};
\filldraw (\skip+\w,3*\h) circle (2pt) node[anchor=east,black]{2};
\filldraw (\skip+2*\w,0) circle (2pt) node[anchor=west,black]{3};
\filldraw (\skip+\w,\h) circle (2pt) node[anchor=west,black]{4};
\draw 
(2*\skip, 0) coordinate[] (a) -- 
(2*\skip+\w, 3*\h) coordinate[] (b) -- 
(2*\skip+2*\w, 0) coordinate[] (c) -- cycle;
\draw (2*\skip, 0) coordinate[] (a) -- (2*\skip + \w, \h) coordinate[] (d);
\draw (2*\skip+\w, 3*\h) coordinate[] (b) -- (2*\skip + \w, \h) coordinate[] (d);
\filldraw (2*\skip,0) circle (2pt) node[anchor=east,black]{1};
\filldraw (2*\skip+\w,3*\h) circle (2pt) node[anchor=east,black]{2};
\filldraw (2*\skip+2*\w,0) circle (2pt) node[anchor=west,black]{3};
\filldraw (2*\skip+\w,\h) circle (2pt) node[anchor=west,black]{4};
\draw 
(3*\skip, 0) coordinate[] (a) -- 
(3*\skip+\w, 3*\h) coordinate[] (b) -- 
(3*\skip+2*\w, 0) coordinate[] (c) -- cycle;
\draw (3*\skip, 0) coordinate[] (a) -- (3*\skip + \w, \h) coordinate[] (d);
\draw (3*\skip+\w, 3*\h) coordinate[] (b) -- (3*\skip + \w, \h) coordinate[] (d);
\draw (3*\skip+2*\w, 0) coordinate[] (b) -- (3*\skip + \w, \h) coordinate[] (d);
\filldraw (3*\skip,0) circle (2pt) node[anchor=east,black]{1};
\filldraw (3*\skip+\w,3*\h) circle (2pt) node[anchor=east,black]{2};
\filldraw (3*\skip+2*\w,0) circle (2pt) node[anchor=west,black]{3};
\filldraw (3*\skip+\w,\h) circle (2pt) node[anchor=west,black]{4};
\end{tikzpicture}
\]
In the first two cases, take $I_1 = \{3, 4\}$, $I_2 = \{2, 4\}$ in $\MF(\sK)$. In the last two cases, take $I_1 = \{1, 2, 3\}$ and $I_2 = \{1, 2, 4\}$ in $\MF(\sK)$. Then $I_1 - I_2 = \{3\}$ and $H^*_{S^1_3}(\zk)$ is not a free $\Z[v_3]$-module by Lemma~\ref{condition}. A contradiction.

\smallskip

\emph{Case 4: $\sK$ has no $3$-cycles and has a $4$-cycle.} If $\sK$ is a $4$-cycle, then $\sK = \partial\varDelta^1 * \partial\varDelta^1$. 

Suppose $\sK$ has more than $4$ vertices. Consider the induced subgraph on $5$ vertices containing a $4$-cycle. Since there are no $3$-cycles, there are three cases:
\[
\begin{tikzpicture}
\pgfmathsetmacro{\w}{0.75};
\pgfmathsetmacro{\mar}{0.75};
\pgfmathsetmacro{\h}{0.75};
\pgfmathsetmacro{\skip}{\mar + 3*\w};
\draw 
(0, \h) coordinate[] (a) -- 
(\w, 2*\h) coordinate[] (b) -- 
(2*\w, \h) coordinate[] (c) -- 
(\w, 0) coordinate[] (d) -- cycle;
\filldraw (0,\h) circle (2pt) node[anchor=east,black]{1};
\filldraw (\w,2*\h) circle (2pt) node[anchor=east,black]{2};
\filldraw (2*\w,\h) circle (2pt) node[anchor=west,black]{3};
\filldraw (\w,0) circle (2pt) node[anchor=west,black]{4};
\filldraw (\w,\h) circle (2pt) node[anchor=west,black]{5};

\draw 
(\skip, \h) coordinate[] (a) -- 
(\skip + \w, 2*\h) coordinate[] (b) -- 
(\skip+2*\w, \h) coordinate[] (c) -- 
(\skip+\w, 0) coordinate[] (d) -- cycle;
\draw (\skip, \h) coordinate[] (a) -- (\skip + \w, \h) coordinate[] (e);
\filldraw (\skip,\h) circle (2pt) node[anchor=east,black]{1};
\filldraw (\skip+\w,2*\h) circle (2pt) node[anchor=east,black]{2};
\filldraw (\skip+2*\w,\h) circle (2pt) node[anchor=west,black]{3};
\filldraw (\skip+\w,0) circle (2pt) node[anchor=west,black]{4};
\filldraw (\skip+\w,\h) circle (2pt) node[anchor=west,black]{5};

\draw 
(2*\skip, \h) coordinate[] (a) -- 
(2*\skip + \w, 2*\h) coordinate[] (b) -- 
(2*\skip+2*\w, \h) coordinate[] (c) -- 
(2*\skip+\w, 0) coordinate[] (d) -- cycle;
\draw (2*\skip, \h) coordinate[] (a) -- (2*\skip + \w, \h) coordinate[] (e);
\draw (2*\skip + 2*\w, \h) coordinate[] (b) -- (2*\skip + \w, \h) coordinate[] (e);
\filldraw (2*\skip,\h) circle (2pt) node[anchor=east,black]{1};
\filldraw (2*\skip+\w,2*\h) circle (2pt) node[anchor=east,black]{2};
\filldraw (2*\skip+2*\w,\h) circle (2pt) node[anchor=west,black]{3};
\filldraw (2*\skip+\w,0) circle (2pt) node[anchor=west,black]{4};
\filldraw (2*\skip+\w,\h) circle (2pt) node[anchor=west,black]{5};
\end{tikzpicture}
\]
In all cases take $I_1 = \{2, 4\}$, $I_2 = \{4, 5\}$ in $\MF(\sK)$, then $I_1 - I_2 = \{2\}$ and $H^*_{S^1_2}(\zk)$ is not a free $\Z[v_2]$-module by Lemma~\ref{condition}. A contradiction again.

\smallskip

\emph{Case 5: each minimal cycle in $\sK$ has length at least~$5$.} Then
$\sK$ has an induced subgraph $\sK_1$ which is an $m$-cycle with $m\ge 5$. As in Example~\ref{nfex}, we have that $H^*_{S^1_m}(\mathcal Z_{\mathcal K_1})$ is not free as a $\Z[v_m]$-module. So $H^*_{S^1_m}(\zk)$ is also not free by Lemma~\ref{property}. A contradiction.
\end{proof}


\begin{thebibliography}{00}
\bibitem{ab-pa19}
Abramyan, Semyon; Panov, Taras. \emph{Higher Whitehead products in moment-angle complexes and substitution of simplicial complexes.} 
Tr. Mat. Inst. Steklova~305 (2019), 7--28 (Russian); Proc. Steklov Inst. Math.~305 (2019), no.~1, 1--21 (English translation).

\bibitem{b-b-p04}
Baskakov, Ilia; Buchstaber, Victor; Panov, Taras.
\emph{Cellular cochain algebras and torus actions}. Uspekhi Mat.
Nauk~59 (2004), no.~3, 159--160 (Russian); Russian Math.
Surveys~59 (2004), no.~3, 562--563 (English translation).

\bibitem{bu-pa15} 
Buchstaber, Victor; Panov, Taras. \textit{Toric topology.} Mathematical Surveys and Monographs, vol.~204, American Mathematical Society, Providence, RI, 2015.

\bibitem{ei-mo66}
Eilenberg, Samuel; Moore, John C. \emph{Homology and fibrations. I. Coalgebras, cotensor product and its derived functors.} Comment. Math. Helv.~40 (1966), 199--236.

\bibitem{fran06}
Franz, Matthias. \emph{On the integral cohomology of smooth toric
varieties.} Proc. Steklov Inst. Math.~252 (2006), 53--62.

\bibitem{fran}
Franz, Matthias. \emph{Dga models for moment-angle complexes}. Preprint (2020);
arXiv:2006.01571.

\bibitem{gr-th16}
Grbi\'c, Jelena; Theriault, Stephen. \emph{Homotopy theory in toric topology.} 
Uspekhi Mat. Nauk~71 (2016), no.~2, 3--80; Russian Math. Surveys~71 (2016), no.~2, 185--251 (English).

\bibitem{l-m-m14}
Luo, Shisen; Matsumura, Tomoo; Moore, W. Frank. \emph{Moment angle complexes and big Cohen--Macaulayness.} Algebr. Geom. Topol.~14 (2014), no.~1, 379--406.

\bibitem{macl63}
Mac Lane, Saunders. \emph{Homology}. Springer, Berlin, 1963.

\bibitem{mccl01}
McCleary, John. \emph{A user's guide to spectral sequences.} Second
edition. Cambridge Studies in Advanced Mathematics,~58. Cambridge
University Press, Cambridge, 2001.

\bibitem{no-ra05}
Notbohm, Dietrich; Ray, Nigel. \emph{On Davis--Januszkiewicz homotopy types. I. Formality and rationalisation.} Algebr. Geom. Topol.~5 (2005), 31--51.

\bibitem{pa-th19}
Panov, Taras; Theriault, Stephen. \emph{The homotopy theory of polyhedral products associated with flag complexes.} Compositio Math.~155 (2019), no.~1, 206--228.

\bibitem{pa-ve16}
Panov, Taras; Veryovkin, Yakov. \emph{Polyhedral products and commutator subgroups of right-angled Artin and Coxeter groups.} Mat. Sbornik~207 (2016), no.~11, 105--126 (Russian); Sbornik Math.~207 (2016), no.~11, 1582--1600 (English translation).
\end{thebibliography}
\end{document}